\numberwithin{equation}{section}
\newcommand{\Id}{\text{Id}}
\newcommand{\R}{\mathbb{R}}
\newcommand{\N}{\mathbb{N}}
\newcommand{\C}{\mathbb{C}}
\newcommand{\eps}{\varepsilon}
\newcommand{\Z}{\mathbb{Z}}
\newtheorem{theorem}{Theorem}[section]
\newtheorem{corollary}[theorem]{Corollary}
\newtheorem{Lemma}[theorem]{Lemma}
\newtheorem{Definition}{Definition}[section]
\title{The Minimal Denominator Function and Geometric Generalizations}
\author{Albert Artiles
\\
\small University of Washington, Seattle, USA \\
}
\date{\today}
\begin{document}
\maketitle

\begin{abstract}
\noindent We provide a geometric interpretation for a  normalized version of the minimal denominator function,
$$q_{\min}(x,\delta)=\min\left\{q\in \N: \text{ there exists } p\in\Z \text{ such that } \frac{p}{q}\in (x-\delta,x+\delta)\right\},$$
introduced by Chen and Haynes in~\cite{CH}. We use this interpretation to compute the limiting distribution of a suitably normalized version of $q_{\min}(x,\delta)$ as a function of $x$, and give generalizations of the idea of minimal denominators to higher-dimensional unimodular lattices, linear forms, and translation surfaces. The key idea is to turn this circle of problems into equidistribution problems for translates of unipotent orbits of a Lie group action on an appropriate moduli space.
\end{abstract}


\section{Introduction}

The study of the distribution of rational numbers in $\R$ has been a subject of interest for many decades. A well-known result due to Dirichlet gives quantitative information of the distribution of rational numbers with small denominators: Given $x\in [0,1]$ and $Q\geq1$, there exists $p,q\in\Z$ such that $q\leq Q$ and 
\begin{equation}\label{Dirichlet}
    \abs{qx-p}<\frac{1}{Q}.
\end{equation}


In this paper we explore the connection between approximations of real numbers by rational numbers with small denominators and the theory of lattices. We also provide generalizations to these ideas in the contexts of linear forms with entries in $[0,1]$ and saddle connections of translation surfaces.

\subsection{Minimal Denominators}\label{Minimal Denominators}
Inspired by the work of Sander-Weiss~\cite{SanderMeiss}, Chen-Haynes~\cite{CH} defined the \emph{minimal denominator function}, $q_{\min}(x,\delta)$, which extracts the lowest denominator of a rational number contained in $(x-\delta,x+\delta)$. More precisely, for $\delta >0$,  define $q_{\min}(\cdot, \delta):[0,1]\rightarrow \N$ by 

\begin{equation}\label{mindenom function}
    q_{\min}(x,\delta)=\min\left\{q\in \N: \text{there exists } p\in \Z \text{ such that } \frac{p}{q}\in(x-\delta,x+\delta)\right\}.
\end{equation}

\paragraph*{\bf Asymptotics} A main result of~\cite{CH} is the computation of the asymptotics of the expected value of $q_{\min}(x,\delta)$ when $x$ is chosen randomly uniformly with respect to the Lebesgue measure on $[0, 1]$ and $\delta \rightarrow 0$.

\begin{equation}
\mathbb{E}_x\left[q_{\min}(x,\delta)\right]=\int_0^1 q_{\min}(x,\delta) dx.
\end{equation}

\begin{theorem}\textbf{(Chen-Haynes~\cite{CH})}\label{theorem:CH} As $\delta \rightarrow0$
\begin{equation}\label{CH}
\mathbb{E}_x\left[q_{\min}(x,\delta)\right]\sim\frac{16}{\pi^2}\frac{1}{\sqrt{\delta}}+O(\log^2(\delta)).
\end{equation}
\end{theorem}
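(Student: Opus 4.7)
The plan is to reduce the expectation to an explicit sum over pairs of Farey neighbors in $[0,1]$ and then extract the asymptotic from classical estimates on sums involving Euler's totient.

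First, I would invoke the layer-cake identity
\[
\mathbb{E}_x[q_{\min}(x,\delta)] = \sum_{q \geq 1} \mathrm{Leb}\bigl(\{x \in [0,1] : q_{\min}(x,\delta) \geq q\}\bigr),
\]
and observe that the event $\{q_{\min}(x,\delta) \geq q\}$ is exactly the complement in $[0,1]$ of the $\delta$-neighborhood of the Farey sequence $\mathcal{F}_{q-1}$.

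Second, I would exploit Farey adjacency. For consecutive fractions $a/b < c/d$ in $\mathcal{F}_{q-1}$, the gap has length $1/(bd)$, so the uncovered portion contributes $\max(0, 1/(bd) - 2\delta)$. A given pair $(a/b, c/d)$ is adjacent in $\mathcal{F}_N$ precisely when $\max(b,d) \leq N \leq b+d-1$, since the mediant $(a+c)/(b+d)$ is the first fraction to fall between them. Swapping the order of summation yields
\[
\mathbb{E}_x[q_{\min}(x,\delta)] = O(1) + \sum_{(a/b,\, c/d)} \min(b,d)\, \max\!\left(0, \tfrac{1}{bd} - 2\delta\right),
\]
with the outer sum over Farey adjacency pairs in $[0,1]$, parameterized bijectively by ordered coprime pairs $(b,d) \in \mathbb{Z}_{\geq 1}^2$ via the unique solution of $bc - ad = 1$ in the allowed range.

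Third, the asymptotic evaluation. Using $\min(b,d)/(bd) = 1/\max(b,d)$ and the symmetry in $b,d$, the sum splits into a leading term $\sum 1/\max(b,d)$ plus a correction $-2\delta \min(b,d)$, both restricted to $\gcd(b,d) = 1$ and $bd \leq 1/(2\delta)$. I would strip the coprimality condition by Möbius inversion (introducing the factor $1/\zeta(2) = 6/\pi^2$) and invoke classical estimates such as $\sum_{n \leq N} \phi(n)/n = 6N/\pi^2 + O(\log N)$ and $\sum_{n \leq N} n\phi(n) = 2N^3/\pi^2 + O(N^2 \log N)$. Rescaling $(b,d) = (s/\sqrt{\delta}, t/\sqrt{\delta})$ converts the double sum into a double integral on the region $\{0 < s < t,\ st \leq \mathrm{const}\}$, which evaluates in closed form and produces the main term of order $\delta^{-1/2}$ with prefactor $16/\pi^2$ once all the constants are combined.

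The main obstacle is sharpening the error to $O(\log^2 \delta)$. The sum naturally splits into a bulk regime where the $\delta$-neighborhoods are disjoint and an edge regime near the threshold $b, d \sim 1/\sqrt{\delta}$ where overlaps begin. Each regime contributes logarithmic errors from Möbius inversion, from the boundary of the truncation $bd \leq 1/(2\delta)$, and from passing from discrete sums to integrals. Unifying these into a single $O(\log^2 \delta)$ bound would require a uniform estimate for $\sum_{d \leq X,\, \gcd(d,b) = 1} 1/d$ with explicit dependence on both $b$ and $X$, and this is where I expect the technical heart of the argument to lie.
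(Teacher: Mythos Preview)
The paper does not prove this theorem. Theorem~\ref{theorem:CH} is stated with attribution to Chen--Haynes~\cite{CH} and used as a black box; the only place it reappears is in the proof of the corollary computing $\int_{X_2} F^1_1(\Lambda)\,d\mu_2(\Lambda)=16/\pi^2$, where it is simply combined with the author's own Corollary~\ref{MeanF1}. There is therefore no proof in this paper to compare your proposal against.

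That said, your outline is a plausible reconstruction of the Chen--Haynes argument: the layer-cake reduction, the identification of $\{q_{\min}\ge q\}$ with the uncovered part of the Farey neighborhood, the adjacency/mediant counting, and the M\"obius-stripped totient sums are the natural ingredients for an elementary-analytic attack on this quantity. Your own caveat about the $O(\log^2\delta)$ error is apt---getting the main term $16/(\pi^2\sqrt\delta)$ from the integral approximation is routine, but controlling the boundary and coprimality errors uniformly is where the actual work lies, and your sketch does not yet pin that down. If your goal is to reproduce the cited result rather than the contents of this paper, you should consult~\cite{CH} directly; if your goal is to engage with this paper, note that its contribution is orthogonal: it reinterprets $\sqrt\delta\,q_{\min}$ dynamically via Lemma~\ref{good lemma} and equidistribution of expanding horocycles, and only invokes the Chen--Haynes constant after the fact.
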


\subsection{A geometric interpretation and generalizations}\label{sec:geom} Our key contribution is to show how to interpret an appropriately normalized version of $q_{\min}$ geometrically. This allows us to compute its limiting distribution through dynamical and geometric methods. These methods of proof in turn allow us to generalize the minimal denominator function to a variety of new contexts, including higher-dimensional Diophantine approximation and the distribution of holonomy vectors of saddle connections on translation surfaces. We first consider a higher dimensional version of our question and then specialize to our problem at hand in \S\ref{existence of limiting distribution}.
\newline

\paragraph*{\bf The space of unimodular lattices} 
 Let $X_m=SL(m,\R)/SL(m,\Z)$ denote the space of \textit{unimodular lattices} in $\R^m$ equipped with the measure $\mu_m$, arising from the Haar measure on $SL(m,\R)$ normalized so that $\mu_m$ is a probability measure on $X_m$. Define the function $F^m_1:X_{m+1}\rightarrow \R$ by

\begin{equation}
    F^m_1(\Lambda)=\min\left\{x>0: \text{ there exists } \mathbf{y}\in \R^{m} \text{ such that } \begin{bmatrix}x\\ \mathbf{y}\end{bmatrix}\in \Lambda\cap C_1\right\},
\end{equation}
where $C_1=\left\{ \begin{bmatrix}
    x&\mathbf{y}
\end{bmatrix}^T\in \R^m: x>0 \text{ and } \mathbf{y}\in \R^{m}, \norm{\mathbf{y}}_{m}<x\right\}$ and $\norm{\cdot}_k$ denotes the max norm on $\R^k$.
\newline

\paragraph*{\bf Normalized $q_{\min}$ and $F^m_\delta$} A key application of our interpretation of $q_{\min}(x,\delta)$ is the following theorem which describes the limiting distribution of the normalized function $\delta^{\frac{1}{2}}q_{\min}(x,\delta)$ in terms of the function $F^1_1(\Lambda)$, where $\Lambda$ is a random unimodular lattice chosen from $X_2$ according to the probability measure $\mu_2$.

\begin{theorem}\label{cumulative 2} Let $P$ denote the uniform probability measure on $[0,1]$. Then for every $T>0$, as $\delta\rightarrow 0$,
\begin{equation}
   P\left(\left\{x: \sqrt{\delta}q_{\min}(x,\delta)\leq T \right\}\right)\rightarrow \mu_2\left(\left\{\Lambda\in X_2: F^1_1(\Lambda)\leq T\right\}\right). 
\end{equation}
\end{theorem}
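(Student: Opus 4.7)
The plan is to reinterpret the statement as equidistribution of an expanding closed horocycle in $X_2 = SL(2,\R)/SL(2,\Z)$ and then apply the Portmanteau theorem.

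\textbf{Geometric interpretation.} For $x \in [0,1]$ set $u_x = \begin{pmatrix} 1 & 0 \\ x & 1 \end{pmatrix}$, so that $u_x\Z^2 = \{(q, qx - p)^T : q, p \in \Z\}$. Since $p/q \in (x - \delta, x + \delta)$ is equivalent to $|qx - p| < q\delta$, the quantity $q_{\min}(x,\delta)$ is the smallest $q > 0$ admitting a lattice vector $(q, y)^T \in u_x\Z^2$ with $|y| < q\delta$. Applying $g_\delta = \mathrm{diag}(\sqrt{\delta}, 1/\sqrt{\delta})$ sends $(q,y)^T$ to $(\sqrt{\delta}\,q, y/\sqrt{\delta})^T$, and the inequality $|y| < q\delta$ transforms into $|y/\sqrt{\delta}| < \sqrt{\delta}\,q$, which is precisely the cone $C_1$ appearing in the definition of $F^1_1$. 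Hence
$$\sqrt{\delta}\, q_{\min}(x,\delta) = F^1_1(g_\delta u_x \Z^2),$$
so with $E_T := \{\Lambda \in X_2 : F^1_1(\Lambda) \leq T\}$ the theorem reduces to
$$\int_0^1 \mathbf{1}_{E_T}(g_\delta u_x \Z^2)\, dx \longrightarrow \mu_2(E_T) \quad \text{as } \delta \to 0.$$

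\textbf{Reduction to horocycle equidistribution.} A direct matrix computation gives $g_\delta u_x g_\delta^{-1} = u_{x/\delta}$, so $g_\delta u_x \Z^2 = u_{x/\delta}(g_\delta \Z^2)$; equivalently, the family $\{g_\delta u_x \Z^2 : x \in [0,1]\}$ is the image under $g_\delta$ of the periodic unipotent orbit through $\Z^2$, i.e. a closed horocycle that expands as $\delta \to 0$. The classical equidistribution theorem for such horocycles on $SL(2,\R)/SL(2,\Z)$ (Sarnak, Dani-Smillie, Eskin-McMullen) gives, for every $f \in C_b(X_2)$,
$$\int_0^1 f(g_\delta u_x \Z^2)\, dx \longrightarrow \int_{X_2} f\, d\mu_2.$$
To promote this to the indicator $\mathbf{1}_{E_T}$, I would invoke the Portmanteau theorem. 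For this it suffices to check $\mu_2(\partial E_T) = 0$: the boundary is contained in the countable union, over lattice-basis data, of codimension-one loci of the form $\{\Lambda : \text{a specified vector of }\Lambda\text{ has first coordinate equal to }T\}$, each of which is $\mu_2$-null. Sandwiching $\mathbf{1}_{E_T}$ between bounded continuous $f^\pm_\eps$ with $\int(f^+_\eps - f^-_\eps)\, d\mu_2 < \eps$ then closes the argument.

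\textbf{Main obstacle.} The delicate point is the passage from $C_b$-equidistribution to the indicator $\mathbf{1}_{E_T}$. Because $F^1_1$ is only lower semicontinuous (it jumps when a lattice vector enters or leaves the open cone) and $E_T$ is not contained in any compact subset of $X_2$ (a lattice with a single short vector inside the cone belongs to $E_T$ however deep into the cusp it lies), I expect the main technical work to be in verifying the $\mu_2$-null boundary property carefully, and in producing continuous approximants $f^\pm_\eps$ whose $\mu_2$-integrals tightly track $\mu_2(E_T)$ in the non-compact setting. This in turn relies on a non-escape-of-mass statement for the expanding horocycle, which is standard via Dani's non-divergence theorem, but which must be applied uniformly in $\delta$ to turn the sandwich into the desired convergence.
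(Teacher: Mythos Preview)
Your proposal is correct and matches the paper's approach: identify $\sqrt{\delta}\,q_{\min}(x,\delta)$ with $F^1_1$ evaluated along an expanding closed horocycle through $\Z^2$, invoke Dani--Smillie equidistribution, and pass from $C_b$ test functions to the indicator via the null-boundary (Portmanteau) condition $\mu_2(\partial E_T)=0$. Your ``main obstacle'' is somewhat overstated: since Dani--Smillie already yields weak convergence to the \emph{probability} measure $\mu_2$, non-escape of mass is automatic and no separate non-divergence input is required---only the null-boundary check, which the paper carries out (in the general $X_{m+n}$ setting) by observing that $\partial E_T$ is contained in a locus of lattices meeting a Lebesgue-null subset of $\R^{m+n}$.
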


\paragraph*{\bf Generalizations} The framework developed above can be generalized in two ways. One, motivated by Diophantine approximation, is to consider higher dimensional versions of the minimal denominator function and connections to dynamics on space of unimodular lattices. Another is to consider different discrete subsets of the plane which arise from geometric constructions, in particular, we look at holonomy vectors of saddle connections on translation surfaces. We explore higher-dimensions in \S\ref{higherdimensions} and discuss saddle connections in \S\ref{Saddle Connections}.
\newline
\newline
\paragraph*{\bf A higher dimensional version of $q_{\min}$} \label{Q^m}
For $m\in \N$, $\textbf{x}\in [0,1]^m$, and $\delta>0$, we define
$$Q^m(\textbf{x},\delta)=\min\left\{q\in\N:\text{ there exists }\textbf{p}\in \Z^m \text{ such that }\norm{\textbf{x}-\frac{1}{q}\textbf{p}}_m<\delta\right\}.$$
\newline
$Q^m(\mathbf{x},\delta)$ computes the smallest least common multiple of the denominators of the rational points in  a $\delta$ neighborhood of the point $x$ in $[0,1]^m.$ In particular, $Q^m$ measures the complexity of rational points in a $\delta$ neighborhood of each point in the $m$-dimensional unit cube.
In this aspect we see that $Q^1(x,\delta)=q_{\min}(x,\delta)$. We then have the following generalization of Theorem \ref{cumulative 2}

\begin{theorem}\label{cumulative m+1} Let $P$ denote the uniform distribution on $[0,1]^m$. Then, as $\delta\rightarrow 0$,
    $$P\left(\left\{\mathbf{x}\in [0,1]^m:\delta^{\frac{m}{m+1}}Q^m(\mathbf{x},\delta)\leq T\right\}\right)\rightarrow \mu_{m+1}\left(\left\{\Lambda\in X_{m+1}: F^{m}_1(\Lambda)\leq T\right\}\right).$$
\end{theorem}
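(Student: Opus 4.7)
The plan is to realize $\delta^{m/(m+1)}Q^m(\mathbf{x},\delta)$ as the value of $F^m_1$ at a specific point of $X_{m+1}$ obtained by applying a diagonal flow to a unipotent deformation of $\Z^{m+1}$, and then to deduce the theorem from equidistribution of expanding horospherical translates.

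\textbf{Step 1: Geometric encoding.} For $\mathbf{x}\in[0,1]^m$, let $u_{\mathbf{x}}=\begin{pmatrix}1 & 0\\ -\mathbf{x} & I_m\end{pmatrix}\in SL(m+1,\R)$ and $\Lambda_{\mathbf{x}}=u_{\mathbf{x}}\Z^{m+1}$, so that every vector of $\Lambda_{\mathbf{x}}$ has the form $(q,\mathbf{p}-q\mathbf{x})^T$ with $(q,\mathbf{p})\in\Z\times\Z^m$. Introduce the diagonal flow $a_t=\mathrm{diag}(e^{-mt},e^t,\dots,e^t)\in SL(m+1,\R)$ and set $\delta=e^{-(m+1)t}$. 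Then
\begin{equation*}
a_t\begin{pmatrix}q\\ \mathbf{p}-q\mathbf{x}\end{pmatrix}=\begin{pmatrix}e^{-mt}q\\ e^t(\mathbf{p}-q\mathbf{x})\end{pmatrix}\in C_1\iff q\in\N\ \text{and}\ \left\|\mathbf{x}-\frac{\mathbf{p}}{q}\right\|_m<\delta,
\end{equation*}
so minimizing the first coordinate yields $F^m_1(a_t\Lambda_{\mathbf{x}})=e^{-mt}Q^m(\mathbf{x},\delta)=\delta^{m/(m+1)}Q^m(\mathbf{x},\delta)$.

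\textbf{Step 2: Equidistribution.} Conjugation sends $u_{\mathbf{x}}$ to $u_{e^{(m+1)t}\mathbf{x}}$, so $U=\{u_{\mathbf{x}}:\mathbf{x}\in\R^m\}$ is the expanding horospherical subgroup of $a_t$. By the classical equidistribution theorem for expanding horospherical translates (Margulis, Dani, Shah; see also Kleinbock--Margulis and Eskin--McMullen),
\begin{equation*}
\int_{[0,1]^m}f\bigl(a_tu_{\mathbf{x}}\Z^{m+1}\bigr)\,d\mathbf{x}\xrightarrow[t\to\infty]{}\int_{X_{m+1}}f\,d\mu_{m+1}\qquad\text{for every }f\in C_b(X_{m+1}).
\end{equation*}
Setting $E_T=\{\Lambda\in X_{m+1}:F^m_1(\Lambda)\le T\}$, Step 1 rewrites the desired statement as $P(\{\mathbf{x}:a_t\Lambda_{\mathbf{x}}\in E_T\})\to\mu_{m+1}(E_T)$, and this follows from Step 2 by the portmanteau theorem once $\mu_{m+1}(\partial E_T)=0$. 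Boundary lattices must possess a nonzero vector on $\partial C_T$ (either $x=T$ or $\|\mathbf{y}\|_m=x$) with no strictly interior witness, a condition cut out by countably many analytic equations of positive codimension in $X_{m+1}$, hence $\mu_{m+1}$-null.

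\textbf{Main obstacle.} The encoding of Step 1 is bookkeeping and the bare equidistribution of Step 2 is standard. The delicate point is that $E_T$ is \emph{not} relatively compact in $X_{m+1}$: lattices deep in the cusp whose short vector happens to lie in $C_T$ belong to $E_T$, so one cannot simply approximate $\mathbf{1}_{E_T}$ by compactly supported continuous functions and invoke Step 2. Closing this gap requires a quantitative non-divergence estimate of Dani--Margulis/Kleinbock--Margulis type for the unipotent averages: for every $\varepsilon>0$ there is a compact $K\subset X_{m+1}$ with $P(\{\mathbf{x}:a_tu_{\mathbf{x}}\Z^{m+1}\notin K\})<\varepsilon$ uniformly in $t$. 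Combined with the measure-zero boundary on $K$ this yields the conclusion, and a parallel argument gives Theorem~\ref{cumulative 2} as the case $m=1$.
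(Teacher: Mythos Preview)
Your proposal is correct and follows essentially the same route as the paper: the same unipotent/diagonal encoding (the paper's Lemma~\ref{lemma m,n} with $n=1$), the same measure-zero boundary check (the paper's Lemma~\ref{Study of A_T}), and the same equidistribution input, with the paper deducing the statement as the $n=1$ specialization of the more general linear-forms Theorem~\ref{linear transformations}. The one cosmetic difference is that the ``main obstacle'' you isolate---upgrading horospherical equidistribution from $C_c$ to indicators via non-divergence---is absorbed in the paper by directly invoking a Kleinbock--Margulis lemma (Lemma~\ref{KleibockMargulis}) stated for almost uniformly continuous $L^2$ test functions, so no separate tightness argument is written out.
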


\subsection{Translation surfaces}\label{translation surfaces intro}

Another context in which our geometric interpretation for $\delta^{\frac{1}{2}}q_{\min}(\cdot,\delta)$ can be generalized is in the context of the distribution of saddle connections of translation surfaces. We will review the necessary background on translation surfaces for this paper in this section. For further background on translation surfaces, see, for example, the survey articles of Zorich~\cite{Zorich} and Hubert-Schmidt~\cite{HubSchm}.
\newline

\begin{figure}[t!]
\centering
\includegraphics[width=50mm]{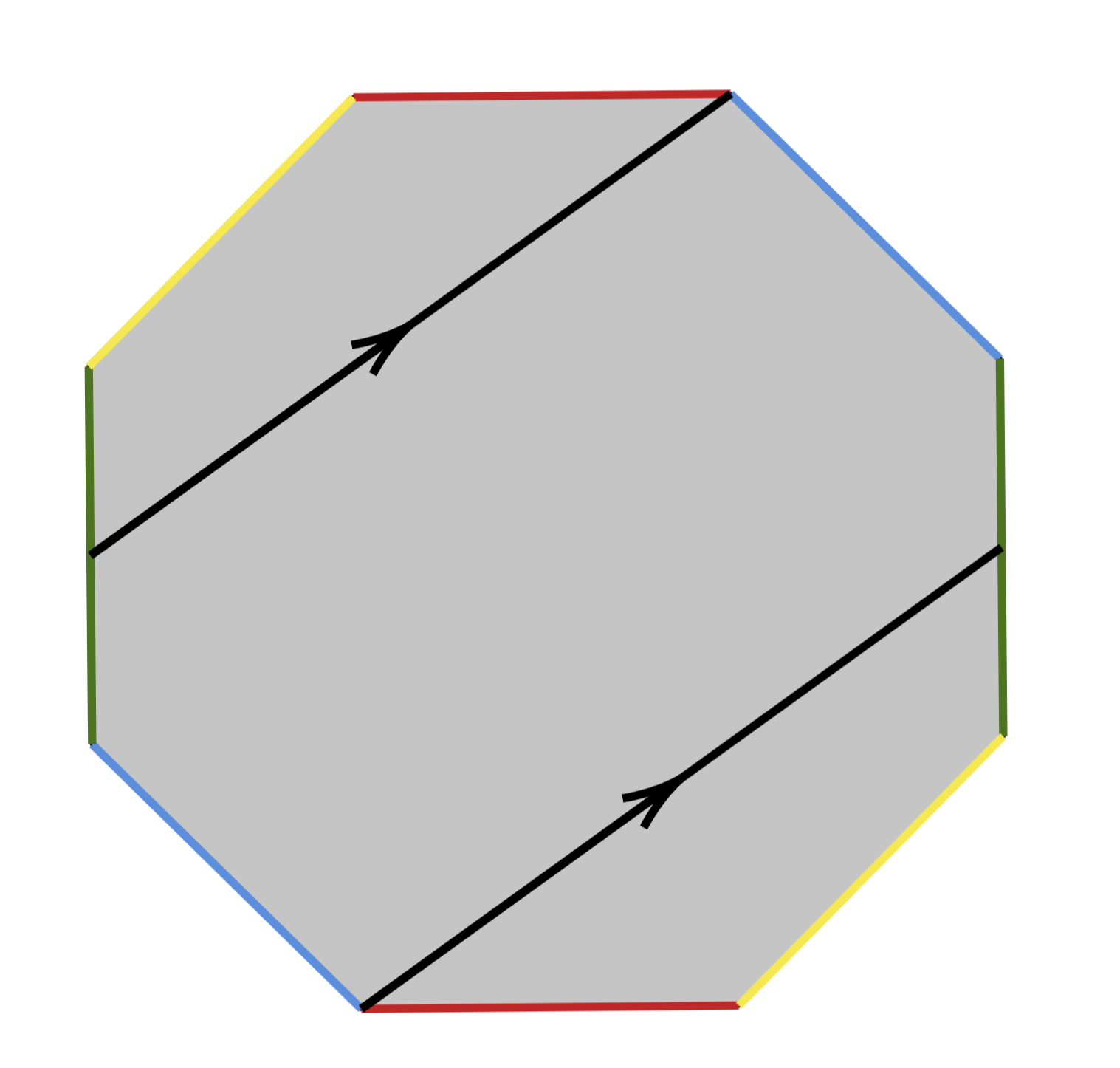}
\caption{A genus $2$ translation surface where sides are identified via color and a depiction of a saddle connection in black.} \label{Translationsurfaceandsaddleconnection}
\end{figure}

\paragraph*{\bf Translation surfaces and polygonal presentation}  A \emph{compact translation surface} is an ordered pair $(X,\omega)$ where $X$ is a compact Riemann surface and $\omega$ is a non-zero holomorphic 1-form on $X$. We usually write $\omega$ to represent the translation surface $(X,\omega)$ for convenience and write $(X,\omega)$ if clarification is needed. A geometric way to think about a translation surface is as follows: Let $P=\bigsqcup_{i=1}^{k}P_{i}$ be a disjoint union of connected polygons (not necessarily convex) $P_i\subset \C$ such that the collection of the sides can be partitioned into parallel pairs $\{s_{a},s_{b}\}$ of the same length; which are subsequently identified via the Euclidean translation $T_{a,b}$ sending $s_{a}$ onto $s_{b}$, to produce a surface. Since translations are holomorphic, the surface obtained from this procedure can be endowed with a complex structure. Moreover, since for each $c\in\C$ we have that $d(z+c) = dz$, the holomorphic $1$-form $dz$ on the plane descends to this surface and endows it with a (non-zero) holomorphic one form $\omega$.
\newline
\paragraph*{\textbf {Saddle connections and holonomy vectors.}} 

Let $\left\{z_{1},z_{2},...,z_{j}\right\}$ be the zeros of $\omega$ on $X$. $\omega$ induces a flat Riemannian metric on the surface $X\setminus \left\{z_{1},...,z_{j}\right\}$. A \emph{saddle connection} is a geodesic $\gamma$ starting and ending at two (possibly the same) zeros of $\omega$ without passing through any other zeros.



We may record saddle connections as complex numbers as follows: given a saddle connection $\gamma$ on a translation surface $\omega$, we define the $\textit{holonomy vector}$ of $\gamma$ by $$z_{\gamma}=\int_\gamma \omega \in\C.$$ The holonomy vector $$z_{\gamma} = x_{\gamma} + i y_{\gamma}$$ records the horizontal ($x_{\gamma}$) and vertical ($ y_{\gamma}$) displacement of $\gamma$ on $\omega$. Denote the collection of all holonomy vectors of $\omega$ by $$\Lambda_\omega := \{ z_{\gamma} : \gamma \mbox{ is a saddle connection on } \omega \}.$$ Masur showed in~\cite{Masur} that for any $\omega$, there exists positive constants $c_1$ and $c_2$ (only dependent on $\omega$) such that
$$c_1R^2\leq\abs{\Lambda_\omega \cap B(0,R)}\leq c_2R^2,$$
characterizing the growth rate of the $\Lambda_\omega$. Much work has been done since then to understand the distribution of saddles for different kinds of translation surfaces.
\newline
\paragraph*{\bf{Short Saddle Connections}}  Let $\omega$ be a translation surface and $\delta>0$ and consider the following function: $$\Psi(\omega,\delta)=\min\left\{\Re(z_\gamma): \gamma \text{ is a saddle connection of } \omega \text{ and } z_\gamma\in \Lambda(\omega)\cap C_\delta\right\},$$
where $C_\delta=\{x+iy\in \C: x>0 \text{ and } y<\delta x\}$.

The next result computes the limiting distribution of $\sqrt{\delta}\Psi(\omega,\delta)$ as $\delta\rightarrow0$ whenever $\omega$ is a Veech surface with Veech group $\Gamma_\omega$. We also set the notation that $Y_\omega=SL(2,\R)/\Gamma_\omega$ and $\mu_\omega$ is the induced measure on $Y_\omega$ by the Haar measure on $SL(2,\R)$. Denote the matrix $\begin{bmatrix}
    1 & 0\\ -\alpha & 1
\end{bmatrix}$ by $h_\alpha$. We then have the following result on the distribution of short saddle connections.

\begin{theorem}\label{cummulative sc} Let $\omega$ be a Veech surface and suppose that $h_{\alpha}\in \Gamma_\omega$ for some $\alpha>0$. Let $P$ be the uniform probability measure on $[0,\alpha]$. Then as $\delta\rightarrow 0$,
    $$P\left(\left\{s\in [0,\alpha]:\sqrt{\delta}\Psi(h_s\omega,\delta)\leq T\right\}\right)\rightarrow \frac{\mu_{\omega}\left(\left\{ g\Gamma_\omega\in Y_\omega: \Psi(g\Gamma_\omega,1)\leq T\right\}\right)}{\mu_\omega\left(Y_\omega\right)}.$$
\end{theorem}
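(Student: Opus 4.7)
The plan is to reduce the theorem to the equidistribution of long geodesic translates of closed horocycles in $Y_\omega$, in close parallel with the approach intended for Theorem~\ref{cumulative 2}. First, I would introduce the geodesic flow element $g_\delta = \begin{bmatrix}\sqrt{\delta} & 0 \\ 0 & 1/\sqrt{\delta}\end{bmatrix}$. Identifying $\C$ with $\R^2$, the map $g_\delta$ sends $x+iy$ to $\sqrt{\delta}\,x + iy/\sqrt{\delta}$, so it takes $C_\delta$ bijectively onto $C_1$ while scaling real parts by $\sqrt{\delta}$. Combined with the standard relation $\Lambda_{g\omega} = g\Lambda_\omega$ for $g\in SL(2,\R)$, this yields the scaling identity
\[
\sqrt{\delta}\,\Psi(\omega,\delta) \;=\; \Psi(g_\delta \omega, 1).
\]

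Next, I would use the commutation relation $g_\delta h_s = h_{s/\delta}\, g_\delta$ (a direct $2\times 2$ check) to rewrite $\sqrt{\delta}\,\Psi(h_s\omega, \delta) = \Psi(h_{s/\delta}\, g_\delta\omega, 1)$. Changing variables $u = s/\delta$ in the probability of interest,
\[
P\bigl(\sqrt{\delta}\,\Psi(h_s\omega,\delta)\le T\bigr) \;=\; \frac{\delta}{\alpha}\int_0^{\alpha/\delta} \mathbf{1}_{\{\Psi(h_u g_\delta \omega,\, 1)\le T\}}\,du,
\]
which is the average of $f := \mathbf{1}_{\{\Psi(\cdot,1)\le T\}}$ along a horocycle segment of length $\alpha/\delta \to \infty$ starting at $g_\delta\omega \in Y_\omega$. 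Because $h_\alpha \in \Gamma_\omega$, the orbit $\{h_s\omega : s\in [0,\alpha]\}$ descends to a closed horocycle in $Y_\omega$, and this segment is precisely its geodesic translate by $g_\delta$.

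The argument would then invoke the classical equidistribution of long geodesic translates of closed horocycles on $SL(2,\R)/\Gamma$ (Sarnak, Dani--Smillie, Eskin--McMullen), asserting that the normalized Lebesgue measure on $g_\delta\cdot\{h_s\omega : s \in [0,\alpha]\}$ converges weakly, as $\delta \to 0$, to $\mu_\omega/\mu_\omega(Y_\omega)$. Applied to suitable continuous approximants of $f$, this produces the desired limit $\mu_\omega(\{\Psi(\cdot,1)\le T\})/\mu_\omega(Y_\omega)$.

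The hard part will be upgrading this weak convergence into pointwise convergence of the CDF for the discontinuous indicator $f$. Two subtleties must be handled. First, the level set $\{\Psi(\cdot,1) = T\}$ should have $\mu_\omega$-measure zero, which should follow from the fact that it is a codimension-one locus cut out by the condition that some saddle connection has real part exactly $T$; this allows $f$ to be sandwiched between continuous functions. More seriously, $Y_\omega$ is non-compact whenever $\Gamma_\omega$ contains parabolic elements, so mass escape into the cusps of $Y_\omega$ along the translated horocycle must be ruled out---this is the real technical heart of the argument and should be handled via quantitative non-divergence estimates for unipotent orbits (Dani's theorem and its descendants), adapted to the Veech-surface setting. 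Once these are secured, the scaling identity, the commutation relation, and horocycle equidistribution combine to give the theorem.
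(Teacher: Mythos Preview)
Your proposal is correct and follows essentially the same route as the paper: the scaling identity $\sqrt{\delta}\,\Psi(\omega,\delta)=\Psi(g_\delta\omega,1)$ (the paper's Lemma~\ref{lemma ts}), the commutation $g_\delta h_s = h_{s/\delta}g_\delta$, and then Dani--Smillie equidistribution of long closed horocycles on $Y_\omega$ (Theorem~\ref{VeechEqui}). One remark: the mass-escape issue you flag as ``the real technical heart'' is already absorbed into the Dani--Smillie theorem as stated, which delivers weak$^*$ convergence to the normalized Haar \emph{probability} measure on $Y_\omega$ without any further non-divergence input; the only genuine residual point is verifying $\mu_\omega(\partial\{\Psi(\cdot,1)\le T\})=0$ so that Portmanteau applies to the indicator, and your codimension-one argument is the right idea (the paper handles the analogous issue for lattices in Lemma~\ref{Study of A_T} but does not repeat it here).
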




\subsection{History}
The contents of this paper create a link between number theory, the theory of homogeneous dynamics, and translation surfaces. We detail the connections more explicitly below.
\newline

\paragraph*{\bf Minimal denominators}
In the 1920s Franel~\cite{Franel} and Landau~\cite{Landau} restated the Riemann Hypothesis as a problem on the distribution of the Farey Sequence in $[0,1]$. This contributed to the growth in interest of questions on the distribution of rational numbers with small denominators. For instance, Hall computed the distribution of the spacing between consecutive Farey fractions, when properly normalized~\cite{Hall}. More recently, Boca-Zaharescu computed correlation formulas for the Farey sequence in~\cite{BZ}. Theorem~\ref{cumulative 2} describes the distributions of the waiting time to for the Farey Sequence to intersect a randomly chosen small interval in $[0,1]$, expanding on the work of Chen-Haynes in~\cite{CH}.
\newline

\paragraph*{\bf Short lattice vectors}\label{shortvectors}
Given a lattice $\Lambda$ in $\R^n$ and a  norm, $\norm{\cdot}$, on $\R^n$, we may ask the following question: What is the shortest non-zero vector in $\Lambda$? This question is known as the \emph{short vector problem} (SVP) and it has been of interest in cryptography. This problem is NP-hard. We create a dictionary that allows us to connect the minimal denominator function to the minimization of the lengths of vectors inside a thin cone. Other variants of the SVP have been studied in the past. The work of Siegel~\cite{Siegel} allows us to compute the average number of intersections between a randomly chosen unimodular lattice and a region of the $\R^n$. More recently, Kim~\cite{Kim} computed the distribution for the lengths of the the first $k$-short vectors in a randomly chosen lattice.
\newline


\paragraph*{\bf Holonomy vectors}
Translation surfaces arise naturally from problems in rational billiards, Riemann surfaces, and number theory. Masur~\cite{Masur} characterized the growth rate of the set of their holonomy vectors in 1990. Over the past few decades a lot of work has been done to get finer statistics on the distribution of saddle connections of different kinds of translation surfaces. Athreya-Chaika~\cite{AthrChai} described the decay of smallest angle gaps between saddle connections in almost any surface. More recently, Kumanduri-Sanchez-Wang proved the existence of gap distribution for any Veech surface in~\cite{KSW}. A consequence of our setup and generalizations is found in Theorem~\ref{cummulative sc}, where we compute the limiting distribution of the length of the shortest saddle connection in a thin cone.

\subsection{Organization of the paper} \S\ref{sec:Preliminaries} contains the background information needed for the ideas used throughout the paper.
In \S\ref{existence of limiting distribution}, we explore the relation between $q_{\min}$ and the theory lattices in detail. We also provide geometric motivation for the proof of Theorem \ref{cumulative 2}. \S\ref{Siegel Measures} explores a general setup for a broad range of equidistribution results. We use this section to describe a motivating theorem for the generalizations of all results in this paper. The subsequent sections are applications of the general philosophy developed in \S~\ref{Siegel Measures} to generalizations of Theorem~\ref{cumulative 2}. \S\ref{higherdimensions} contains higher dimensional versions of Theorem~\ref{cumulative 2} in two contexts: higher dimensional lattices and linear forms. \S\ref{Saddle Connections} contains more information on translation surface and the proof of Theorem~\ref{cummulative sc}.
\newline

\paragraph*{\bf Acknowledgements}\label{Acknowlegements}
 I would like to thank my advisor, Jayadev Athreya, for introducing me to this collection of ideas, their exceptional patience, enthusiasm, and unconditional support.

\section{Preliminaries}\label{sec:Preliminaries}
In this section we introduce the relevant background to follow the ideas in this paper.

\subsection{The space of unimodular lattices} Our setup in \S\ref{existence of limiting distribution} and \S\ref{higherdimensions} will be on the space of \emph{unimodular lattices}. A unimodular lattice $\Lambda$ is a maximal discrete subgroup of $\R^m$ such that the volume of the quotient $\R^m/\Lambda$ is one. 

The space of unimodular lattices in $\R^m$ will be denoted by $X_m$. The group $SL(m,\R)$ acts on $\R^m$ via linear transformations. This transfers to a transitive action on $X_m$. Notice that the stabilizer of the unimodular lattice $\Z^m$ under the $SL(m,\R)$ action is $SL(m,\Z)$. This observation allows us to identify $SL(m,\R)/SL(m,\Z)$ with $X_m$ via $g\Z^m\leftrightarrow gSL(m,\Z).$  In  the remaining of the paper we will identify $X_m$ and $SL(m,\R)/SL(m,\Z)$ via this correspondence implicitly. We equip $SL(m,\R)$ with its Borel $\sigma$-algebra and its Haar measure $\tilde{\mu}_m$. We denote by $\mu_m$ the Haar probability measure on $X_m$. With this setup, we have that $\mu_m$ is an ergodic measure with respect to the $SL(m,\R)$ action on $X_m$.

\subsection{Geodesic and Horocyclic flows on $X_2$}
We use this section to describe two flows, the geodesic and the horocylcic flows on $X_2$. For each $s,t\in \R$, define $$h_s=\begin{bmatrix}1&0\\-s&1\end{bmatrix} \mbox{ and } g_t=\begin{bmatrix}e^{\frac{t}{2}}&0\\0&e^{-\frac{t}{2}}\end{bmatrix}.$$ Note that \begin{equation}\label{eq:conjugacy} g_t h_s g_{-t} = h_{se^{-t}} \end{equation}
\begin{Definition}
    The geodesic and horocyclic flow on $X_2$ are defined by $(t,g\Z^2)\mapsto (g_tg)\Z^2$ and $(s,g\Z^2)\mapsto(h_sg)\Z^2$, respectively.
\end{Definition}
Since the horocyclic and geodesic flows are defined by left translation by elements of $SL(2,\R)$, it follows that $\mu_2$ is preserved by the two flows.

Dani-Smillie proved in~\cite{DaniSmillie} the following result concerning the distribution of the orbits of points in $X_2$ under the horocyclic flow and characterization of Borel probability measures preserved by the horocyclic flow. 
\begin{theorem}(\textbf{Dani-Smillie}~\cite{DaniSmillie})\label{equidistribution}
    Let $\{\Lambda_i\}_{i\in \N}$ be a sequence of points in $X_2$. Suppose that $\Lambda_i$ have period $s_i$ under the horocyclic flow. Let $\nu_i$ be the uniform measure on the orbit of $\Lambda_{i}$, then if $s_i\rightarrow\infty$,
    $$\nu_i\stackrel{\ast}{\rightharpoonup} \mu_2,$$
    where the convergence above refers to weak$^*$ convergence.
\end{theorem}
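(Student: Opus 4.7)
My plan is to show that every weak-$*$ subsequential limit $\nu$ of $\{\nu_i\}$ coincides with $\mu_2$, by combining non-divergence of unipotent orbits with a thickening-plus-mixing argument of Margulis--Eskin--McMullen type.

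First I would establish non-divergence of the family $\{\nu_i\}$: by Dani's quantitative non-divergence estimate for unipotent flows on $X_2$, for every $\eta > 0$ there is a compact set $K_\eta \subset X_2$ with $\nu_i(K_\eta) \geq 1 - \eta$ for all sufficiently large $s_i$, since a long horocyclic orbit segment can spend only a controlled proportion of its length near the cusp---otherwise the underlying lattice would contain an unexpectedly short vector and violate Minkowski. By Banach--Alaoglu any subsequential weak-$*$ limit $\nu$ is a probability measure, and it is $h_s$-invariant because each $\nu_i$ is and invariance is preserved under weak-$*$ limits.

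Next I would reduce the problem to equidistribution of a translate of a unit-length horocycle. Using the conjugacy~\eqref{eq:conjugacy}, set $t_i = \log s_i$ and $\Lambda_i' = g_{t_i}\Lambda_i$; then $\Lambda_i'$ has $h_s$-period exactly $1$. Substituting $s = us_i$ and invoking the identity $h_{us_i} = g_{-t_i} h_u g_{t_i}$ gives, for every $f \in C_c(X_2)$,
$$\int f\, d\nu_i \;=\; \frac{1}{s_i}\int_0^{s_i} f(h_s\Lambda_i)\, ds \;=\; \int_0^1 f\bigl(g_{-t_i} h_u \Lambda_i'\bigr)\, du,$$
so the task becomes to show that the geodesic translates $g_{-t_i}$ of the unit-length $h_u$-orbit through $\Lambda_i'$ equidistribute as $i \to \infty$. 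Note that, because the stabilizer of a periodic horocycle lies in the upper-triangular group, $\Lambda_i'$ can be taken inside a fixed compact subset of $X_2$, which is convenient for the next step.

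For the final step I would apply the classical thickening trick. Writing $u_p = \begin{bmatrix} 1 & p \\ 0 & 1\end{bmatrix}$ for the opposite unipotent, thicken the unit horocycle in the two transversal directions by the box $\{g_r u_p : |r|,|p| < \eps\}$; by uniform continuity of $f$ the resulting thickened average differs from $\int_0^1 f(g_{-t_i} h_u \Lambda_i')\, du$ by $O(\eps)$ uniformly in $i$. The $g_{-t_i}$-image of the thickened box is a tube of bounded Haar volume of length $s_i$ in the $h_u$-direction, width $\eps$ in the $g_r$-direction, and width $\eps e^{-t_i}$ in the $u_p$-direction; by Howe--Moore mixing of the $g_t$-action on $(X_2,\mu_2)$ this tube equidistributes, so the thickened average tends to $\int f\, d\mu_2$. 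Letting $i \to \infty$ and then $\eps \to 0$ yields the theorem. The main obstacle is the thickening step: one must express the thickened average as the pairing of $f$ against a measure that is genuinely a $g_{-t_i}$-pushforward of a fixed compactly supported reference measure on $X_2$, so that Howe--Moore applies, and the $O(\eps)$ error must be controlled uniformly in $i$---this uniformity in turn relies on the non-divergence of step one to keep everything supported in a compact region.
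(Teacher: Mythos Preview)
The paper does not prove Theorem~\ref{equidistribution}; it is quoted from Dani--Smillie~\cite{DaniSmillie} as a black-box input and no proof (or sketch) appears anywhere in the text. So there is no ``paper's own proof'' to compare your proposal against.

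That said, your sketch is a reasonable and essentially correct route to the result, though it is not the argument of~\cite{DaniSmillie}. Dani and Smillie proceed via measure classification: any weak-$*$ limit $\nu$ is an $h_s$-invariant probability measure (your step one gives tightness, hence no escape of mass), and the only ergodic $h_s$-invariant probability measures on $X_2$ are $\mu_2$ and the uniform measures on closed horocycles; since $s_i\to\infty$, no closed-horocycle measure can occur in the ergodic decomposition of $\nu$, forcing $\nu=\mu_2$. Your approach instead bypasses measure classification by reducing to equidistribution of long geodesic pushes of the fixed period-$1$ closed horocycle and then invoking mixing via the Margulis thickening trick. Both arguments are standard; the thickening route is arguably more robust (it generalizes to settings where measure classification is unavailable), while the measure-classification route is shorter once one accepts the classification. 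One minor comment on your write-up: the uniform-in-$i$ control of the $O(\eps)$ error does not actually require non-divergence, since for $f\in C_c(X_2)$ the map $g\mapsto f(gx)$ is uniformly continuous in $g$ near the identity, uniformly in $x$; non-divergence is needed only to guarantee that subsequential limits are probability measures.
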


Notice $h_s$-periodic lattices are precisely those with a vertical vector. In particular $\Z^2$ has period 1 with respect to the horocyclic flow. Since $g_th_sg_{-t}=h_{se^{-t}}$, it follows that $g_t\Z^2$ has period $e^{-t}$ under the horocyclic flow.

\section{Existence of limiting distribution for $q_{\min}$}\label{existence of limiting distribution}
In this section we are interested in the distribution of vectors with small slope and small horizontal part in a lattice and their relation to the minimal denominator function. To this end, we define the following family of functions on $X_2$: given $\delta>0$, $F^1_\delta:X_2\rightarrow \R$ is given by
$$F^1_\delta(\Lambda)=\min\left\{u\in \R:\text{ there exists } v\in\R \text{ such that } \begin{bmatrix}
    u\\v
\end{bmatrix}\in \Lambda\cap C_\delta\right\},$$
where $C_\delta=\left\{[x,y]^T\in\R^2:x>0, \abs{y}< \delta x \right\}.$

\begin{figure}[t!]
\centering
\includegraphics[width=95mm]{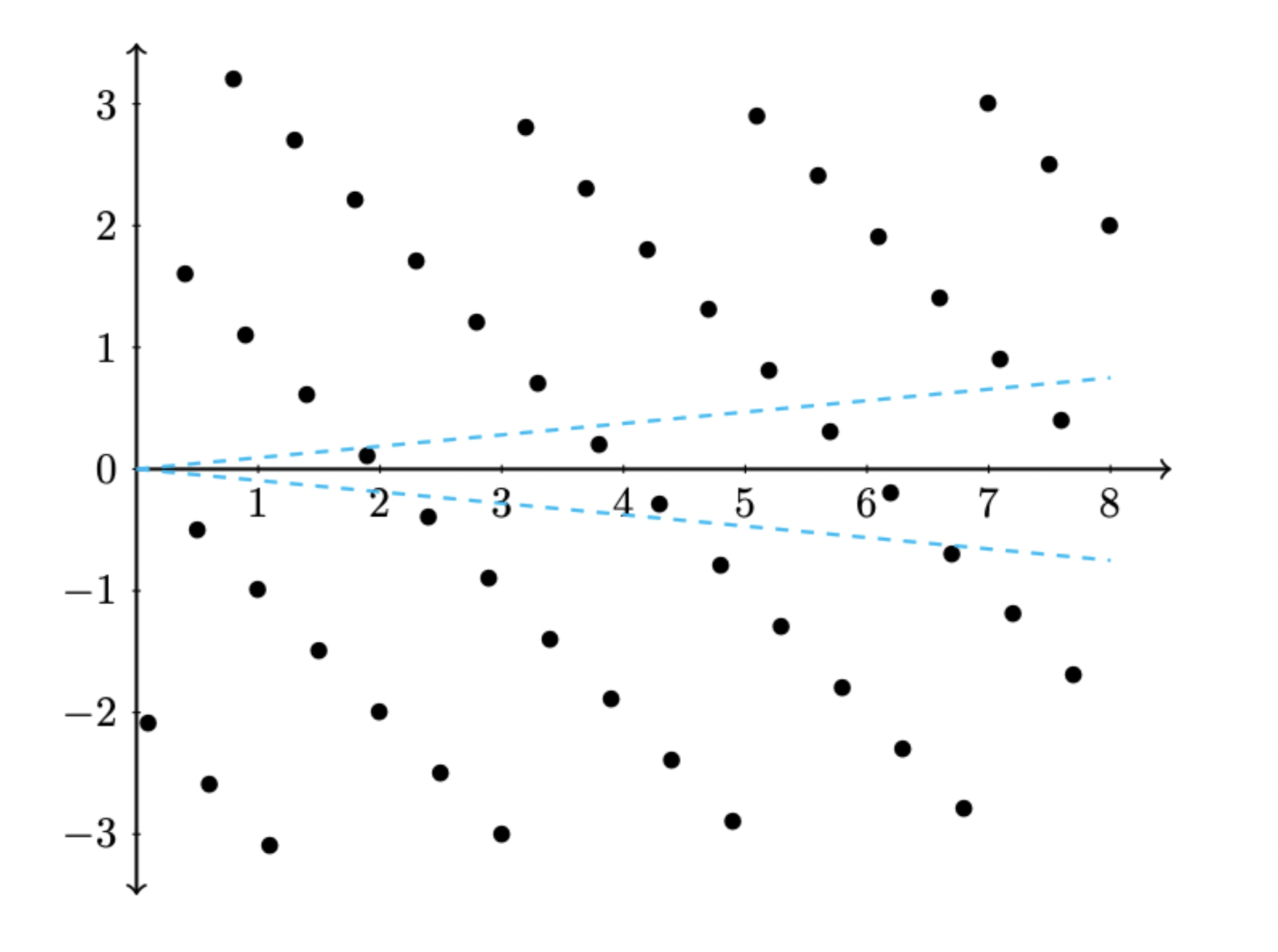}
\caption{Depiction of a unimodular lattice and its intersection with $C_\delta$.} \label{Unimodular lattice intersection cone}
\end{figure}

\begin{figure}\label{3}
\centering
\includegraphics[width=95mm]{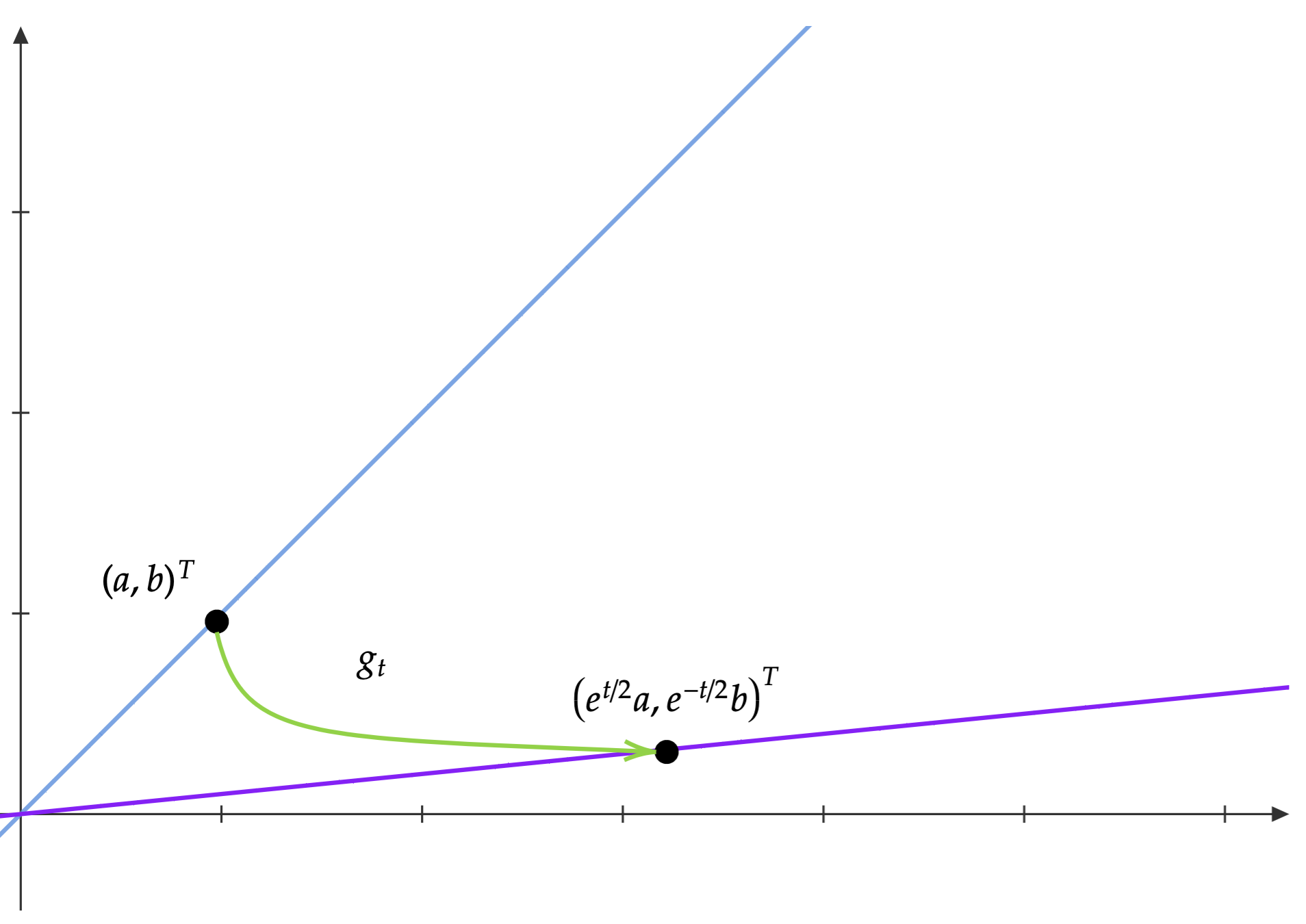}
\caption{Action of $g_{t}$ on lines though the origin.} 
\label{g_t action}
\end{figure}

\begin{figure}
\centering
\includegraphics[width=100mm]{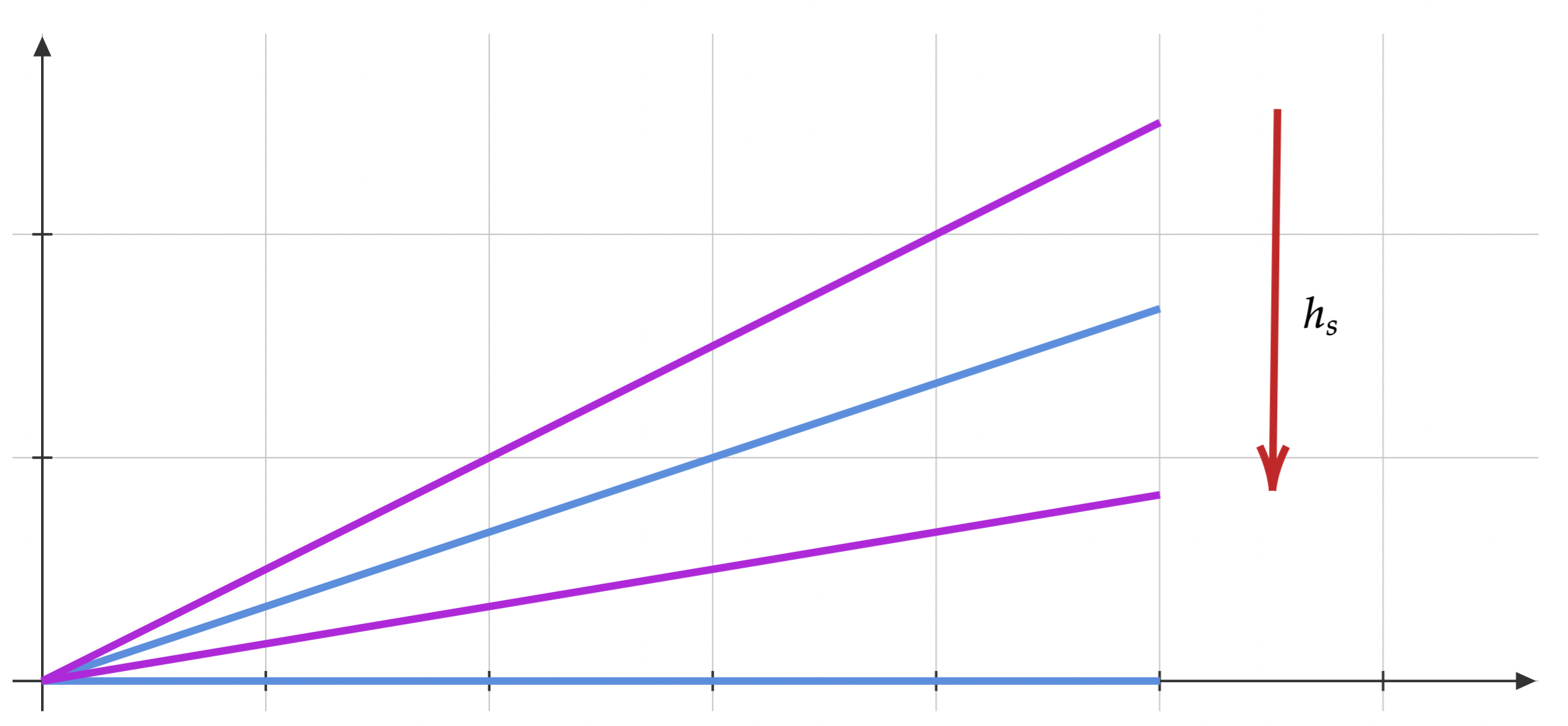}
\caption{Action of $h_{s}$ on lines though the origin.} 
\label{h_s action}
\end{figure}

$h_s$ and $g_t$ act on $\R^2$ via linear transformations and hence they act on lines through the origin. In particular, $h_s$ preserves the difference of slopes between any two lines through the origin while $g_t$ multiplies the slope of any line by $e^{-t}$.
See figures~\ref{g_t action} and~\ref{h_s action}.

We begin by working out the ways in which $F^1_\delta$ behaves when precomposed with the flows $g_t$ and $h_s$. 

\begin{Lemma}\label{good lemma} 
\textcolor{white}{fsdfa}

\begin{enumerate}
    \item  \emph{For each} $t\in \R$, \begin{equation}\label{geohomogeneity}
    F^1_\delta(g_t\Lambda)=e^{\frac{t}{2}}F^1_{\delta e^{t}}(\Lambda).
\end{equation}
 \item \emph{For each} $x\in [0,1]$, \begin{equation}\label{Fdeltaqminrelation}
    F^1_\delta(h_x\Z^2)=q_{\min}(x,\delta).
\end{equation}

\end{enumerate}
\end{Lemma}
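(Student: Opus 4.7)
My plan is to prove both identities by direct computation from the definitions, using the explicit matrix forms of $g_t$ and $h_x$, and keeping careful track of how the cone condition transforms.

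For part (1), I would parametrize vectors in $g_t\Lambda\cap C_\delta$ by writing $\begin{bmatrix}u\\v\end{bmatrix} = g_t\begin{bmatrix}x\\y\end{bmatrix}$ with $\begin{bmatrix}x\\y\end{bmatrix}\in\Lambda$. Since $g_t=\operatorname{diag}(e^{t/2},e^{-t/2})$, this gives $u=e^{t/2}x$ and $v=e^{-t/2}y$. The defining conditions $u>0$ and $|v|<\delta u$ become $x>0$ and $|e^{-t/2}y|<\delta e^{t/2}x$, i.e.\ $|y|<\delta e^{t}x$. Thus $\begin{bmatrix}u\\v\end{bmatrix}\in g_t\Lambda\cap C_\delta$ if and only if $\begin{bmatrix}x\\y\end{bmatrix}\in \Lambda\cap C_{\delta e^{t}}$. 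Taking the minimum of $u=e^{t/2}x$ over this set factors out $e^{t/2}$ from the minimum of $x$ over $\Lambda\cap C_{\delta e^{t}}$, yielding $F^1_\delta(g_t\Lambda)=e^{t/2}F^1_{\delta e^{t}}(\Lambda)$.

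For part (2), I would compute $h_x\Z^2$ explicitly: a general element is $h_x\begin{bmatrix}q\\p\end{bmatrix}=\begin{bmatrix}q\\p-xq\end{bmatrix}$ with $(q,p)\in\Z^2$. The cone condition $\begin{bmatrix}q\\p-xq\end{bmatrix}\in C_\delta$ becomes $q>0$ (so $q\in\N$) together with $|p-xq|<\delta q$. Dividing by $q$, this is exactly $\bigl|\tfrac{p}{q}-x\bigr|<\delta$, i.e.\ $\tfrac{p}{q}\in(x-\delta,x+\delta)$. Minimizing the first coordinate $u=q$ over such vectors therefore gives precisely $q_{\min}(x,\delta)$ as defined in~\eqref{mindenom function}.

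Both parts are essentially bookkeeping, so there is no serious obstacle; the only subtlety is to confirm that the strict inequalities in the definitions of $C_\delta$ and of $q_{\min}$ (the condition $p/q\in(x-\delta,x+\delta)$ is an open interval) match up, and that the positivity condition $u>0$ combined with integrality gives $q\in\N$ rather than a larger set. Once these match, no further argument is required. As a sanity check, I would verify consistency with the conjugacy relation~\eqref{eq:conjugacy}: combining parts (1) and (2) should recover the fact, familiar from the Dani correspondence, that $g_t h_x \Z^2$ being ``deep in the cusp'' at time $t=-\log\delta$ corresponds to $x$ having an unusually good rational approximation of denominator $\lesssim \delta^{-1/2}$.
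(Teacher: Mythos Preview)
Your proposal is correct and follows essentially the same route as the paper: both parts are proved by direct computation, translating the cone condition through the explicit action of $g_t$ (yielding $g_{-t}C_\delta=C_{\delta e^t}$ and pulling out the factor $e^{t/2}$) and of $h_x$ (rewriting $|p-xq|<\delta q$ as $|x-p/q|<\delta$). Your treatment is if anything slightly cleaner about the positivity/integrality of $q$ than the paper's, which speaks of fractions ``in simplest form'' even though reducedness plays no role in either definition.
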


\begin{proof}
We will prove equation (\ref{geohomogeneity}) by doing a computation.
\begin{align*}
    F^1_{\delta}(g_t\Lambda)&=\min\left\{u: \begin{bmatrix}
        u\\ v
    \end{bmatrix} \in g_t\Lambda\cap C_\delta  \right\}\\
    &=\min\left\{u: g_{-t}\begin{bmatrix}
        u\\ v
    \end{bmatrix} \in \Lambda\cap g_{-t}C_\delta  \right\}\\
    &=\min\left\{u: \begin{bmatrix}
        e^{-\frac{t}{2}}u\\ e^{\frac{t}{2}}v
    \end{bmatrix} \in \Lambda\cap C_{\delta e^t}  \right\}\\
    &=e^{\frac{t}{2}}\min\left\{u': \begin{bmatrix}
        u'\\ v'
    \end{bmatrix} \in \Lambda\cap C_{\delta e^{t}}  \right\}\\
    &=e^{\frac{t}{2}}F^1_{\delta e^{t}}(\Lambda).
\end{align*}
This completes the proof of \eqref{geohomogeneity}.

We now proceed to prove equation (\ref{Fdeltaqminrelation}). We begin by defining a correspondence between rational numbers and lattice points. If $\frac{p}{q}$ is a rational number written in simplest form, we identify it with the integer vector $[q,p]^T$. We claim that $\frac{p}{q}\in (x-\delta,x+\delta)$ precisely when $h_x[q,p]^T\in C_\delta$.

\emph{Proof of claim:} 
Notice that $h_x[q,p]^T=[q,p-qx]^T$. Then we have that $[q,p-qx]^T\in C_\delta$ precisely when $\delta>\abs{\frac{p-qx}{q}}=\abs{x-\frac{p}{q}}$ which is equivalent to $\frac{p}{q}\in (x-\delta,x+\delta)$.
This completes the proof of our claim.

Our claim implies that the set of denominators of fractions in $(x-\delta,x+\delta)$ is precisely the set of $x$-coordinates of the lattice points in $h_x\Z^2\cap C_\delta$. In particular they have the same minimum, which means $q_{\min}(x,\delta)=F^1_{\delta}(h_x\Z^2)$ as desired.

\end{proof}
\begin{figure}
\centering
\includegraphics[width=100mm]{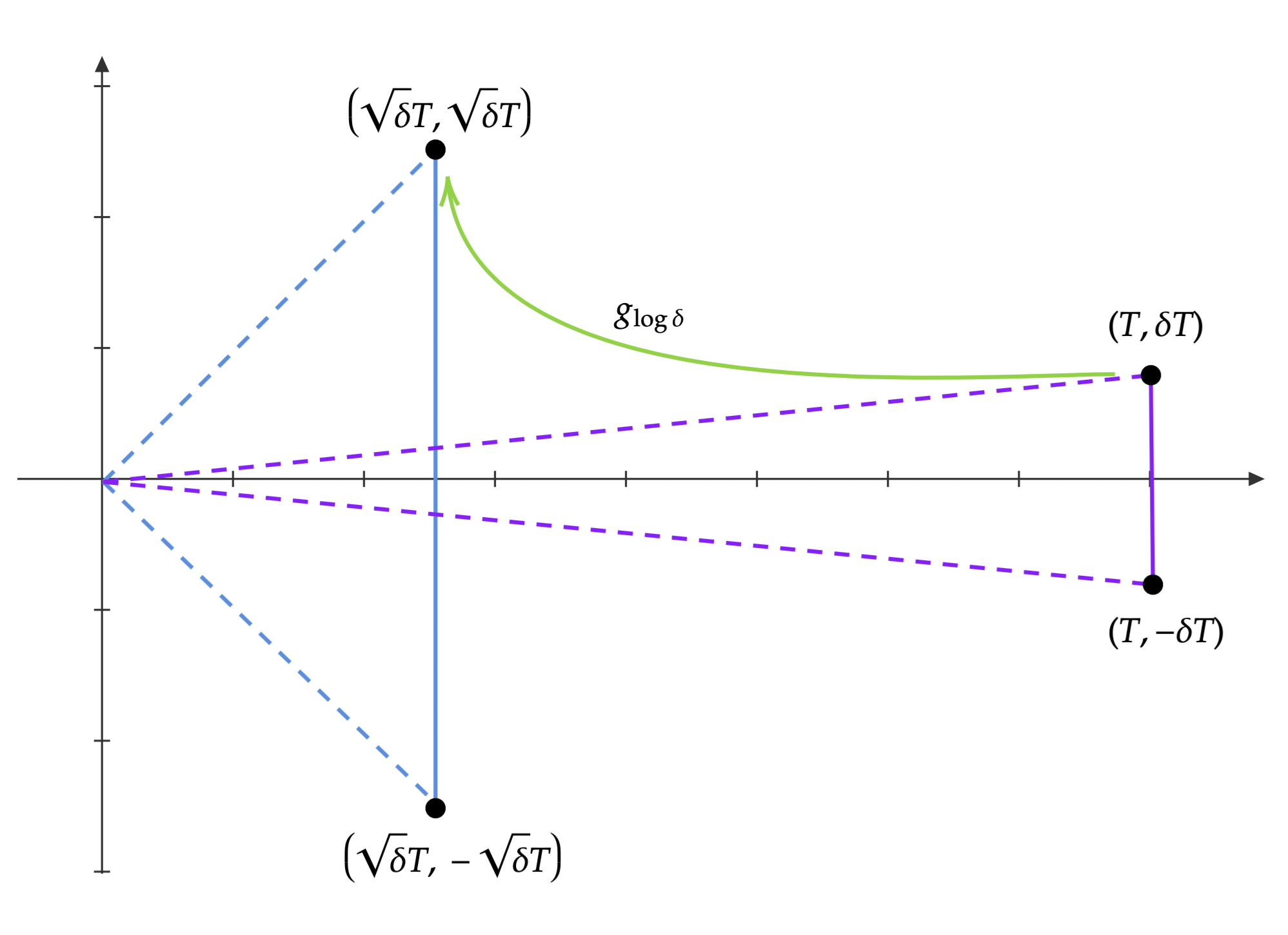}
\caption{Action of $g_{\log \delta}$ on $C_\delta \cap \{(x,y):x\leq T\}$. }\label{gtcone}
\end{figure}

Equation (\ref{geohomogeneity}) in particular allows us understand how the quantity $F^1_\delta(\Lambda)$ changes as we change $\Lambda$ in relation to the geodesic flow. Notice that if $t=-\log\delta$, we have that
\begin{equation}\label{homogeneity}
    F^1_\delta(g_{-\log\delta}\Lambda)= \delta^{-\frac{1}{2}}F^1_1(\Lambda) 
\end{equation}
for each $\Lambda \in X_2$. The identity (\ref{homogeneity}) allows us to exchange the problem of understanding $q_{\min}$ as a problem with a fixed lattice and changing region of intersection with a randomly selected lattice intersecting a fixed region of $\R^2$.
Figure~\ref{gtcone} contains a visual representation of the effect of $g_{\log\delta}$ on the cone $C_\delta$.



\begingroup
\def\thetheorem{\ref{cumulative 2}}
\begin{theorem}
Let $P$ denote the uniform probability measure on $[0,1]$. Then for every $T>0$, as $\delta\rightarrow 0$,
\begin{equation}
   P\left(\left\{x: \sqrt{\delta}q_{\min}(x,\delta)\leq T \right\}\right)\rightarrow \mu_2\left(\left\{\Lambda\in X_2: F^1_1(\Lambda)\leq T\right\}\right). 
\end{equation}
\end{theorem}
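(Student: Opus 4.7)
The approach is to reinterpret the left-hand side as an integral over a long closed horocycle orbit in $X_2$, and then apply Dani--Smillie equidistribution.

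Combining the two parts of Lemma \ref{good lemma} yields the key identity
$$\sqrt{\delta}\,q_{\min}(x,\delta)=F^1_1\bigl(g_{\log\delta}\,h_x\Z^2\bigr).$$
Indeed, \eqref{Fdeltaqminrelation} gives $q_{\min}(x,\delta)=F^1_\delta(h_x\Z^2)$, and \eqref{homogeneity} applied to $\Lambda=g_{\log\delta}h_x\Z^2$ (so $g_{-\log\delta}\Lambda=h_x\Z^2$) rewrites this as $F^1_1(\Lambda)=\sqrt{\delta}\,F^1_\delta(h_x\Z^2)$. Next, using the conjugation $g_th_s=h_{se^{-t}}g_t$ coming from \eqref{eq:conjugacy} with $t=\log\delta$, we unfold
$$g_{\log\delta}\,h_x\Z^2=h_{x/\delta}\,g_{\log\delta}\Z^2.$$
By the remark following Theorem \ref{equidistribution}, $g_{\log\delta}\Z^2$ is horocycle-periodic with period $1/\delta$, so as $x$ runs through $[0,1]$ the lattice $h_{x/\delta}\,g_{\log\delta}\Z^2$ traces exactly one full period of the closed horocycle through $g_{\log\delta}\Z^2$. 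The substitution $s=x/\delta$ identifies the pushforward of $P$ under $x\mapsto g_{\log\delta}h_x\Z^2$ with the uniform orbit measure $\nu_\delta$ on that closed horocycle. The theorem thus reduces to showing
$$\nu_\delta(A_T)\longrightarrow\mu_2(A_T),\qquad A_T:=\{\Lambda\in X_2:F^1_1(\Lambda)\le T\}.$$
Since $1/\delta\to\infty$ as $\delta\to 0$, Theorem \ref{equidistribution} gives $\nu_\delta\stackrel{\ast}{\rightharpoonup}\mu_2$.

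The main obstacle is the final step: $F^1_1$ is not continuous on $X_2$ (it jumps when a lattice vector crosses the boundary of the open cone $C_1$), so weak-$\ast$ convergence does not automatically give convergence of $\nu_\delta(A_T)$. By the Portmanteau theorem it suffices to show $\mu_2(\partial A_T)=0$. A direct analysis shows that $\partial A_T$ is contained in the union of the level set $\{F^1_1=T\}$ and the set of lattices containing a nonzero vector on one of the rays $v=\pm u,\ 0<u\le T$; both sets are countable unions of real-codimension-one submanifolds of $X_2$ (orbits of the one-parameter subgroups stabilizing a fixed boundary or minimizing vector), and hence $\mu_2$-null. This null-set verification is the one non-bookkeeping step in the argument; once it is in hand the theorem follows.
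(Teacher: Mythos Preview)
Your argument is correct and follows exactly the same route as the paper: rewrite $\sqrt{\delta}\,q_{\min}(x,\delta)$ as $F^1_1(g_{\log\delta}h_x\Z^2)$ via Lemma~\ref{good lemma}, conjugate to identify the pushforward of $P$ with the uniform measure on the closed horocycle through $g_{\log\delta}\Z^2$ of period $1/\delta$, and invoke Dani--Smillie. You are in fact more careful than the paper's proof of this particular theorem: the paper simply cites Theorem~\ref{equidistribution} at the last step without justifying the passage from weak-$\ast$ convergence to convergence on the set $A_T$, whereas you correctly isolate the Portmanteau condition $\mu_2(\partial A_T)=0$ and sketch why it holds; the paper supplies that verification only later, in the higher-dimensional setting (Lemma~\ref{Study of A_T}).
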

\addtocounter{theorem}{-1}
\endgroup

\begin{proof}
By equation (\ref{Fdeltaqminrelation}),
$\sqrt{\delta}q_{\min}(x,\delta)\leq T$ precisely when $\sqrt{\delta}F^1_\delta(h_x\Z^2)\leq T$. This means that $$P\left(\left\{ x: \sqrt{\delta}q_{\min}(x,\delta)\leq T \right\}\right)=P\left(\left\{ x: \sqrt{\delta}F^1_\delta(h_x\Z^2)\leq T \right\}\right).$$
Using equation (\ref{homogeneity}), we get that
\begin{align*}
    P\left(\left\{ x: \sqrt{\delta}F^1_\delta(h_x\Z^2)\leq T \right\}\right)&=P\left(\left\{ x: \sqrt{\delta}F^1_\delta(g_{-\log\delta} g_{\log\delta}h_x\Z^2)\leq T \right\}\right)\\
    &=P\left(\left\{ x: F^1_1(g_{\log\delta}h_x\Z^2)\leq T \right\}\right).
\end{align*}
Notice that $g_{\log\delta}h_x=h_{\frac{x}{\delta}}g_{\log{\delta}}$.
This means that the lattice $g_{\log\delta}\Z^2$ has period $\delta^{-1}$ under the horocyclic flow. Hence, by Theorem~\ref{equidistribution}, we have that as $\delta\rightarrow 0$,
$$P\left(\left\{ x: F^1_1(g_{\log\delta}h_x\Z^2)\leq T \right\}\right)\rightarrow\mu_2\left(\left\{\Lambda: F^1_1(\Lambda)\leq T\right\}\right)$$
as desired.

\end{proof}

Theorem~\ref{cumulative 2} immediately gives us the following corollary which provides a geometric interpretation of the scalar found in equation (\ref{CH}) as the solution to the integral below.
\begin{corollary}\label{MeanF1} As $\delta\rightarrow0$,
$$\sqrt{\delta}\mathbb{E}_x[q_{\min}(x,\delta)]=\sqrt{\delta}\int_0^1 q_{\min}(x,\delta) dP(x)\rightarrow\int_{X_2}F^1_1(\Lambda)d\mu_2(\Lambda).$$
\end{corollary}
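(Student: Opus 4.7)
The plan is to combine Theorem~\ref{cumulative 2} with a uniform integrability argument. First, I would use Lemma~\ref{good lemma} together with the scaling identity~(\ref{homogeneity}) to rewrite
$$\sqrt{\delta}\,q_{\min}(x,\delta)=F^1_1\bigl(g_{\log\delta}h_x\Z^2\bigr),$$
so that the left-hand side of the corollary becomes an integral of $F^1_1$ over the closed horocycle orbit of period $\delta^{-1}$:
$$\sqrt{\delta}\,\mathbb{E}_x[q_{\min}(x,\delta)]=\int_0^1 F^1_1\bigl(g_{\log\delta}h_x\Z^2\bigr)\,dx.$$
Let $\nu_\delta$ denote the pushforward of the uniform measure on $[0,1]$ under $x\mapsto g_{\log\delta}h_x\Z^2$; Theorem~\ref{cumulative 2} is then precisely the pointwise convergence of CDFs $\nu_\delta(\{F^1_1\le T\})\to\mu_2(\{F^1_1\le T\})$.

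Next, I would write both expectations via the layer-cake formula
$$\int F^1_1\,d\nu=\int_0^\infty \nu(\{F^1_1>T\})\,dT,$$
so the corollary reduces to interchanging the limit $\delta\to 0$ with the $T$-integration. Theorem~\ref{cumulative 2} supplies pointwise convergence of the integrand at every continuity point of $T\mapsto\mu_2(\{F^1_1\le T\})$ (all but countably many $T$), and dominated convergence will then close the argument provided we can exhibit an integrable envelope $g(T)$ with $\nu_\delta(\{F^1_1>T\})\le g(T)$ uniformly in $\delta$.

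Producing this uniform tail bound is the main obstacle. On the limit side, the event $\{F^1_1>T\}$ consists of lattices with no non-zero vector in the cone $R_T=C_1\cap\{x\le T\}$, which has area $T^2$; Siegel's integral formula together with Rogers' second-moment formula yields $\mu_2(\{F^1_1>T\})=O(T^{-2})$ via a Chebyshev-type inequality. On the orbit side, I would exploit the explicit combinatorial description of $\{x\in[0,1]:\sqrt{\delta}\,q_{\min}(x,\delta)>T\}$ as the complement of $\bigcup_{q\le T/\sqrt{\delta}}(p/q-\delta,p/q+\delta)$, and use the equidistribution and near-disjointness of Farey intervals of order $T/\sqrt{\delta}$ to obtain the same $O(T^{-2})$ bound uniformly in $\delta$; an alternative route is a non-escape-of-mass estimate for unipotent orbits in the spirit of Dani--Margulis, directly transferring the $\mu_2$ tail bound to all $\nu_\delta$.

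As a sanity check, combining the corollary with Theorem~\ref{theorem:CH} identifies $\int_{X_2}F^1_1\,d\mu_2=16/\pi^2$. If one is willing to quote Chen--Haynes \emph{a priori}, the proof can be shortened: apply Theorem~\ref{cumulative 2} to the bounded continuous truncations $F^{[M]}=\min(F^1_1,M)$, combine with monotone convergence on the $\mu_2$ side and with Fatou's lemma (using the known finite limit $16/\pi^2$) on the $\nu_\delta$ side, to conclude without an explicit tail estimate---at the price of using the Chen--Haynes asymptotic as a black box.
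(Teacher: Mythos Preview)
Your approach---layer-cake formula followed by Theorem~\ref{cumulative 2}---is exactly the strategy the paper uses. In fact, the paper's proof consists of precisely those two lines and nothing more: it writes
\[
\sqrt{\delta}\,\mathbb{E}_x[q_{\min}(x,\delta)]=\int_0^\infty P\bigl(\{x:\sqrt{\delta}\,q_{\min}(x,\delta)\ge T\}\bigr)\,dT
\]
and then invokes Theorem~\ref{cumulative 2} directly, without addressing the interchange of the limit $\delta\to 0$ with the $dT$-integration. You have correctly identified this interchange as the substantive step and outlined how to justify it. Your primary route (a uniform $O(T^{-2})$ tail bound, either via Farey-interval combinatorics on the $\nu_\delta$ side or via Siegel--Rogers on the $\mu_2$ side, feeding into dominated convergence) is the right idea and would make the argument rigorous; the paper simply omits this.

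One caution on your secondary route, using Chen--Haynes as a black box: truncation at level $M$ plus convergence in distribution gives $\int F^{[M]}\,d\nu_\delta\to\int F^{[M]}\,d\mu_2$, and then Fatou/MCT yields only the one-sided inequality $\liminf_{\delta\to 0}\int F^1_1\,d\nu_\delta\ge\int F^1_1\,d\mu_2$. Combining with the Chen--Haynes value $16/\pi^2$ for the left-hand limit gives $16/\pi^2\ge\int F^1_1\,d\mu_2$, but the reverse inequality $\limsup_{\delta\to 0}\int F^1_1\,d\nu_\delta\le\int F^1_1\,d\mu_2$ still requires controlling the tail contribution $\int (F^1_1-M)^+\,d\nu_\delta$ uniformly in $\delta$---which is precisely uniform integrability. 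So this alternative does not fully bypass your main obstacle; it only halves it.
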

\begin{proof}
    Notice that
    \begin{align*}
        \sqrt{\delta}\mathbb{E}_{x}[q_{\min}(x,\delta)]&=\int_0^1\sqrt{\delta}q_{\min}(x,\delta)dP(x)\\
        &=\int_0^\infty P\left(\{x\in[0,1]:\sqrt{\delta}q_{\min}(x,\delta)\geq T\}\right)dT.
    \end{align*}
By Theorem~\ref{cumulative 2}, we have that $$\sqrt{\delta}\mathbb{E}_x[q_{\min}(x,\delta)]\rightarrow \int_{X_2}F^1_1(\Lambda)d\mu_2(\Lambda),$$
as desired.
\end{proof}
\begin{corollary}
    $$\int_{X_2} F^1_1(\Lambda)d\mu_2(\Lambda)=\frac{16}{\pi^2}.$$
\end{corollary}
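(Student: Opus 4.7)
The plan is to combine the two results already established just before this corollary. On the one hand, Corollary~\ref{MeanF1} identifies the limit of $\sqrt{\delta}\,\mathbb{E}_x[q_{\min}(x,\delta)]$ as $\delta \to 0$ with the integral $\int_{X_2} F^1_1(\Lambda)\,d\mu_2(\Lambda)$. On the other hand, the Chen--Haynes asymptotic (Theorem~\ref{theorem:CH}) pins down the numerical value of this same limit as $\tfrac{16}{\pi^2}$. Uniqueness of limits then gives the claimed equality.

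More concretely, I would proceed in three short steps. First, invoke Theorem~\ref{theorem:CH}, which asserts
\[
\mathbb{E}_x[q_{\min}(x,\delta)] = \frac{16}{\pi^2}\frac{1}{\sqrt{\delta}} + O(\log^2(\delta)),
\]
and multiply through by $\sqrt{\delta}$ to obtain
\[
\sqrt{\delta}\,\mathbb{E}_x[q_{\min}(x,\delta)] = \frac{16}{\pi^2} + O\!\left(\sqrt{\delta}\,\log^2(\delta)\right) \;\longrightarrow\; \frac{16}{\pi^2}
\]
as $\delta \to 0$, since the error term vanishes. Second, invoke Corollary~\ref{MeanF1}, which says the same quantity converges to $\int_{X_2} F^1_1(\Lambda)\,d\mu_2(\Lambda)$. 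Third, equate the two limits.

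The main (only) obstacle is essentially bookkeeping: one must be comfortable that both limits exist and can be compared. The existence of the left-hand limit is supplied by Chen--Haynes with a quantitative error term, and the identification with the lattice integral is exactly the content of the preceding corollary, whose proof uses the layer-cake formula together with Theorem~\ref{cumulative 2}. Once both are in hand, the corollary follows in a single line, with no further calculation required.
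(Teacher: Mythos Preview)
Your proposal is correct and follows exactly the same approach as the paper: combine Corollary~\ref{MeanF1} with Theorem~\ref{theorem:CH}, multiply the Chen--Haynes asymptotic by $\sqrt{\delta}$, and equate the two limits. The paper's proof is just the three-line computation you describe.
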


\begin{proof}
    This is a consequence of Corollary~\ref{MeanF1} and Theorem~\ref{theorem:CH}.
\begin{align*}
    \int_{X_2}F^1_1(\Lambda)d\mu_2(\Lambda)&=\lim_{\delta\rightarrow0} \sqrt{\delta}\int_0^1 q_{\min}(x,\delta) dx\\
    &=\lim_{\delta\rightarrow0}\sqrt{\delta}\left(\frac{16}{\pi^2}\frac{1}{\sqrt{\delta}}+O\left(\log^2(\delta)\right)\right)\\
    &= \frac{16}{\pi^2}
\end{align*}   

\end{proof}

\section{Equivariant processes}\label{Siegel Measures}

Before proceeding to generalize the minimal denominator function, we describe a general framework for equidistribution theorems. This circle of ideas have been inspired by the work of Marklof-Str\"ombergsson~\cite{MarklofStrombergsson} and Veech~\cite{Veech}, who in part was inspired by the work of Siegel~\cite{Siegel} and Masur~\cite{EskinMasur}. We take the setup as described in Athreya-Ghosh~\cite{AthreyaGhosh}. Let $m\geq 2$ and $G\subset GL(m,\R)$ be a subgroup. Suppose $(X,\lambda)$ is a standard Borel space and $G$ acts on $X$ via measure-preserving transformations. A ($G-$)\emph{equivariant process}, also known as a \emph{Siegel measure}, is a triple $(X,\mu,\nu)$ where $\nu$ is a map
$\nu:X\rightarrow \mathcal{M}(\R^m)$,
where $\mathcal{M}(\R^m)$ is the space of $\sigma$-finite Radon Borel measures on $\R^m$ and we have that for each $g\in G$ and $x\in X$, $\nu(gx)=g_*\nu(x)$. We shall call $\nu$ the \emph{equivariant process map.}

\subsection{Chen-Haynes distributions}
Let $\mathcal{S}=\{S_T\}_{T>0}$ be a family of Borel subsets of $\R^m$ with the property that if $T_1\leq T_2$, then $S_{T_1}\subset S_{T_2}$. Let $(X_n,\lambda_n,\nu_n)$ be a sequence of equivariant processes. We define the \emph{Chen-Haynes distribution} associated to $\mathcal{S}$ and $(X_n,\lambda_n,\nu_n)$ by

\begin{equation}\label{chen-haynes distribution}
    \xi(X_n,\lambda_n,\nu_n,\mathcal{S})(T)=\lim_{n\rightarrow \infty} \lambda_n\left(\left\{x\in X_n: \nu_n(x)(S_T)\geq 1\right\}\right).
\end{equation}

\paragraph*{\bf Equidistribution} We specialize the setup above by looking at sequences of $G$-equivariant processes over a fixed space and a fixed equivariant process map. Let $(X,\lambda_n,\nu)$ be such a sequence of $G$-equivariant processes. Then we have the following result which allows us to exchange weak$^*$ convergence results for equidistribution results.

\begin{theorem}\label{general equidistribution}
    Suppose that $\lambda(\partial\{x\in X: \nu(x)(S_T)\geq 1\})=0$. If $\lambda_n\stackrel{\ast}{\rightharpoonup} \lambda$, then
    \begin{equation}\label{concrete Chen-Haynes Distribution}
        \xi(X,\lambda_n,\nu,\mathcal{S})(T)=\lambda\left(\left\{x\in X: \nu(x)(S_T)\geq 1\right\}\right).
    \end{equation}
\end{theorem}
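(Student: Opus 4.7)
The plan is to recognize this as a direct application of the Portmanteau theorem for weak$^*$ convergence of measures. Set $A_T := \{x \in X : \nu(x)(S_T) \geq 1\}$, so that the goal is to show $\lambda_n(A_T) \to \lambda(A_T)$ as $n \to \infty$, because the left-hand side of \eqref{chen-haynes distribution} specializes, under the standing hypothesis that $X$ and $\nu$ are fixed, to exactly $\lim_n \lambda_n(A_T)$.

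First I would verify that $A_T$ is a Borel subset of $X$. The natural topology on $\mathcal{M}(\R^m)$ is the vague (or weak$^*$) topology, and an equivariant process $\nu: X \to \mathcal{M}(\R^m)$ is implicitly assumed to be measurable with respect to the Borel structure, so the map $x \mapsto \nu(x)(S_T)$ is measurable (approximating $\mathbf{1}_{S_T}$ by continuous compactly supported test functions if $S_T$ is open, or handling the Borel case via standard monotone-class arguments); hence $A_T = \{\nu(\cdot)(S_T) \geq 1\}$ is Borel. The hypothesis $\lambda(\partial A_T) = 0$ then says that $A_T$ is a $\lambda$-continuity set.

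Next, by the Portmanteau theorem, weak$^*$ convergence $\lambda_n \stackrel{\ast}{\rightharpoonup} \lambda$ is equivalent to the statement that $\lambda_n(A) \to \lambda(A)$ for every Borel set $A$ with $\lambda(\partial A) = 0$. Applying this to $A = A_T$ yields
\begin{equation*}
\xi(X,\lambda_n,\nu,\mathcal{S})(T) = \lim_{n\to\infty} \lambda_n(A_T) = \lambda(A_T) = \lambda\left(\{x \in X : \nu(x)(S_T) \geq 1\}\right),
\end{equation*}
which is precisely \eqref{concrete Chen-Haynes Distribution}.

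The main obstacle I anticipate is not the Portmanteau step itself, but verifying the measurability/topological regularity needed so that $A_T$ is genuinely a Borel set to which Portmanteau applies; in particular one should confirm that $\{\mu \in \mathcal{M}(\R^m) : \mu(S_T) \geq 1\}$ is Borel in the vague topology on $\mathcal{M}(\R^m)$, which follows from $\mu \mapsto \mu(S_T)$ being upper or lower semicontinuous for $S_T$ closed or open respectively, and in general by a monotone-class argument approximating $S_T$ by such sets. Once this is in hand, Portmanteau finishes the argument immediately.
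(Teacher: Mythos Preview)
Your proposal is correct and essentially identical to the paper's proof: both set $A_T=\{x\in X:\nu(x)(S_T)\geq 1\}$ and invoke the Portmanteau theorem (the paper phrases it as approximating $\chi_{A_T}$ above and below by bounded continuous functions) to conclude $\lambda_n(A_T)\to\lambda(A_T)$. The only difference is that you add a careful discussion of why $A_T$ is Borel, which the paper omits entirely.
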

\begin{proof}
    Since all of our measures are Radon Borel, we have that weak$^*$ convergence of measures implies that for all bounded continuous functions $f:X\rightarrow \R$, $\lambda_n(f)\rightarrow\lambda(f).$ This means that if $A$ is a set subset of $X$ with $\lambda(\partial A)=0$, then we may approximate $\chi_A$ from above and below via bounded continuous functions. This means that $\lambda_n(A)\rightarrow \lambda(A)$.
    We complete the proof by setting $A= \{x\in X: \nu(x)(S_T)\geq 1\}$.

\end{proof}

\paragraph*{\bf Theorem~\ref{cumulative 2} revisited}
In the case of Theorem~\ref{cumulative 2}, we see that the sets $S_T=C_1\cap\{[x,y]^T\in \R^2: x\leq T\}$ and the measures $\lambda_n$ given by the uniform measure on the $h_s$-orbit of $g_{\log{1/n}}\Z^2$ in $X_2$ provides us with a sequence of standard Borel spaces. Let $G=\{h_s:s\in \R\}\subset SL(2,\R)$ and $\nu: X_2\rightarrow \mathcal{M}(\R^2)$ be given by
\begin{equation}\label{equivariant process map}
    \nu(\Lambda)=\sum_{x\in \Lambda}\delta_x,
\end{equation}
\noindent
where $\delta_x$ is the Dirac-delta measure with support $\{x\}$. This construction provides us with a sequence of equivariant processes $(X_2,\lambda_n, \nu)$ 

Notice that $F_1^1(\Lambda)\leq T$ precisely when $\nu(\Lambda)(S_T)\geq 1$. This rephrases Theorem~\ref{cumulative 2} as the computation of the Chen-Haynes distribution associated to the sequence
$(X_2,\lambda_n, \nu)$ and the family of cones $\mathcal{S}$.
\begin{align*}
    \xi(X_2,\lambda_n,\nu,\mathcal{S})(T)&=\lim_{n\rightarrow \infty} \lambda_n\left(\left\{\Lambda\in X_2: \nu(\Lambda)(S_T)\geq 1\right\}\right)\\
    &=\lim_{n\rightarrow \infty} \lambda_n\left(\left\{\Lambda\in X_2: F_1^1(\Lambda)\leq T\right\}\right)\\
    &=\mu_2\left(\left\{\Lambda\in X_2: F_1^1(\Lambda)\leq T\right\}\right).
\end{align*}
\noindent
In what follows, we will compute the Chen-Haynes distribution of equivariant process associated to higher dimensional Diophantine approximations and the holonomy vectors of translation surfaces. We will proceed similarly, providing appropriate families of sets $\mathcal{S}$, sequences of measure $\{\lambda_n\}$ in the pertinent space, and weak$^*$ convergence results in order to satisfy the hypotheses of Theorem~\ref{general equidistribution}. The equivariant process map will always be of the form seen in equation (\ref{equivariant process map}).







\section{Generalization of $q_{\min}$ in higher dimensions}\label{higherdimensions}

\noindent The function $q_{\min}$ arose from a question about minimizing denominators in a $\delta$ neighborhood of a randomly chosen point in $[0,1]$. A natural generalization of this question is the the following:
\newline

\noindent\textbf{Question:} Let $m$ be a positive integer and endow $\R^m$ with the max norm, $\norm{\cdot}_m$. Pick $\delta>0$ and $\mathbf{x}\in [0,1]^m$. We ask, what is the smallest positive integer $q$ such that there is a vector $\mathbf{p}\in \Z^m$ such that $\norm{\mathbf{x}-\frac{1}{q}\mathbf{p}}_m<\delta?$
\newline
\newline
\noindent This question gives a natural extension of the $q_{\min}$ function: For $m\in \N$, $\textbf{x}\in [0,1]^m$ and $\delta>0$. We define 
$$Q^m(\textbf{x},\delta)=\min\left\{q\in\N:\text{ there exists }\textbf{p}\in \Z^m \text{ such that }\norm{\textbf{x}-\frac{1}{q}\textbf{p}}_m<\delta\right\}.$$

\noindent Motivated by the work in \S\ref{existence of limiting distribution}, we study the statistics of $Q^m(\textbf{x},\delta)$ by relating it to the theory of lattices. 


Define the following function
$F^m_\delta:X_{m+1}\rightarrow\N$ by $$F^m_\delta(\Lambda)=\min\left\{u: \text{there exists } \textbf{v}\in \R^m \text{ such that }\begin{bmatrix}
    u\\ \textbf{v}
\end{bmatrix}\in \Lambda \cap C^m_\delta\right\},$$
where $C^m_\delta=\left\{\begin{bmatrix}t\\t\mathbf{x}\end{bmatrix}: \norm{\textbf{x}}_{m}<\delta, t>0\right\}$.
With these new definitions, we are now able to compute the limiting cumulative distribution for a properly normalized version of $Q^m(\textbf{x},\delta)$, that being $\delta^{\frac{m}{m+1}}Q^m(x,\delta)$.

\begin{theorem} Let $P$ denote the uniform distribution on $[0,1]^m$. Then, as $\delta\rightarrow 0$,
    $$P\left(\left\{\mathbf{x}\in [0,1]^m:\delta^{\frac{m}{m+1}}Q^m(\mathbf{x},\delta)\leq T\right\}\right)\rightarrow \mu_{m+1}\left(\left\{\Lambda\in X_{m+1}: F^{m}_1(\Lambda)\leq T\right\}\right).$$
\end{theorem}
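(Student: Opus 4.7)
The plan is to mirror the proof of Theorem~\ref{cumulative 2}, replacing the horocyclic and geodesic flows on $X_2$ by their higher-dimensional analogs in $SL(m+1,\R)$. Define
\[
h_\mathbf{x} = \begin{bmatrix} 1 & \mathbf{0} \\ -\mathbf{x} & I_m \end{bmatrix}, \qquad g_t = \operatorname{diag}\!\left(e^{\frac{mt}{m+1}},\, e^{-\frac{t}{m+1}},\, \ldots,\, e^{-\frac{t}{m+1}}\right),
\]
both in $SL(m+1,\R)$. A direct matrix computation yields the commutation relation $g_t h_\mathbf{x} g_{-t} = h_{e^{-t}\mathbf{x}}$, the higher-dimensional replacement for~\eqref{eq:conjugacy}.

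The first step is to establish the analog of Lemma~\ref{good lemma}. The identity $F^m_\delta(h_\mathbf{x}\Z^{m+1}) = Q^m(\mathbf{x},\delta)$ follows from the correspondence $\mathbf{p}/q \leftrightarrow [q,\mathbf{p}]^T$: since $h_\mathbf{x}[q,\mathbf{p}]^T = [q,\mathbf{p}-q\mathbf{x}]^T$, this vector lies in $C^m_\delta$ precisely when $\|\mathbf{x}-\mathbf{p}/q\|_m<\delta$. A change-of-variables calculation tracking how $g_{-t}$ acts on $C^m_\delta$ gives the scaling relation
\[
F^m_1(g_t\Lambda) = e^{\frac{mt}{m+1}} F^m_{e^t}(\Lambda).
\]
Specializing to $t=\log\delta$ and evaluating at $\Lambda = h_\mathbf{x}\Z^{m+1}$ yields
\[
F^m_1\!\left(g_{\log\delta}\,h_\mathbf{x}\,\Z^{m+1}\right) = \delta^{\frac{m}{m+1}}\, Q^m(\mathbf{x},\delta).
\]

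Next I would use the commutation relation to rewrite $g_{\log\delta} h_\mathbf{x}\Z^{m+1} = h_{\delta^{-1}\mathbf{x}}\, g_{\log\delta}\Z^{m+1}$. A short computation of the $U$-stabilizer of $g_{\log\delta}\Z^{m+1}$, where $U = \{h_\mathbf{y}:\mathbf{y}\in\R^m\}$, shows that it equals $\{h_\mathbf{y} : \mathbf{y}\in\delta^{-1}\Z^m\}$, so the $U$-orbit of $g_{\log\delta}\Z^{m+1}$ is a closed compact torus with fundamental domain $[0,\delta^{-1}]^m$ in the $\mathbf{y}$-parameter. Consequently the pushforward of $P$ under $\mathbf{x}\mapsto g_{\log\delta}h_\mathbf{x}\Z^{m+1}$ is exactly the $U$-invariant probability measure on this closed orbit. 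Applying Theorem~\ref{general equidistribution} with the sets $S_T = C^m_1\cap\{x_1\leq T\}$ reduces the desired statement to weak-$\ast$ convergence of these $U$-orbit measures to $\mu_{m+1}$ as $\delta\to 0$, together with the standard fact that $\mu_{m+1}$ assigns zero measure to the level set $\{F^m_1(\Lambda)=T\}$.

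The main obstacle is this equidistribution statement. In dimension two it is supplied directly by Dani-Smillie (Theorem~\ref{equidistribution}), but in higher dimensions one needs an analog for expanding translates of closed horospherical orbits in $SL(m+1,\R)/SL(m+1,\Z)$. Such equidistribution is a classical consequence of Ratner's theorems on unipotent flows, and can also be obtained by the more direct mixing-based arguments of Kleinbock-Margulis; the only hypotheses to verify are that the initial orbit is non-divergent (automatic, since it is closed) and that $g_{\log\delta}$ expands $U$ as $\delta\to 0$, which is immediate from $g_t h_\mathbf{x} g_{-t} = h_{e^{-t}\mathbf{x}}$ with $t=\log\delta\to-\infty$. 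With this input, the argument closes exactly as in the proof of Theorem~\ref{cumulative 2}.
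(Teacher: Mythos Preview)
Your proposal is correct and follows essentially the same route as the paper: the paper deduces this theorem as the $n=1$ case of Theorem~\ref{linear transformations}, whose proof establishes the higher-dimensional analog of Lemma~\ref{good lemma} (this is Lemma~\ref{lemma m,n}), verifies that $\mu_{m+1}(\partial A_T)=0$ (Lemma~\ref{Study of A_T}), and invokes the Kleinbock--Margulis equidistribution result (Lemma~\ref{KleibockMargulis}) in place of Dani--Smillie. The only differences are cosmetic: the paper works in the general $(m,n)$ setting before specializing, uses a reparameterized geodesic flow $g_t^{m,n}$, and phrases the equidistribution input as an integral convergence statement rather than as weak-$\ast$ convergence of closed horospherical orbit measures.
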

\begin{corollary} As $\delta\rightarrow 0$,
    $$\mathbb{E}_{\mathbf{x}}\left[\delta^{\frac{m}{m+1}}Q^{m}(\mathbf{x},\delta)\right]=\delta^{\frac{m}{m+1}}\int_{[0,1]^m}Q^m(\mathbf{x},\delta)dP(\mathbf{x})\rightarrow \int_{X_{m+1}}F_1^{m}(\Lambda)d\mu_{m+1}(\Lambda).$$
\end{corollary}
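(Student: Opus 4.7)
The plan is to mirror the strategy of Corollary \ref{MeanF1}, replacing Theorem \ref{cumulative 2} with Theorem \ref{cumulative m+1}. The first step is to rewrite the expectation via the tail formula for a nonnegative random variable:
\begin{equation*}
\delta^{\frac{m}{m+1}}\mathbb{E}_{\mathbf{x}}\bigl[Q^m(\mathbf{x},\delta)\bigr] = \int_0^\infty P\Bigl(\bigl\{\mathbf{x}\in[0,1]^m: \delta^{\frac{m}{m+1}}Q^m(\mathbf{x},\delta)\geq T\bigr\}\Bigr)\,dT,
\end{equation*}
and similarly to express $\int_{X_{m+1}} F_1^m(\Lambda)\,d\mu_{m+1}(\Lambda)$ as the integral of the tail $\mu_{m+1}(\{F_1^m\geq T\})$ over $T\in(0,\infty)$. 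For each $T>0$ the integrands converge by Theorem \ref{cumulative m+1} (applied to the complementary event), and the corollary will follow once the swap of limit and integral is justified.

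The main obstacle is precisely this swap: one needs a $\delta$-uniform dominating function. For this I would invoke Dirichlet's simultaneous approximation theorem in the following form: for every $\mathbf{x}\in[0,1]^m$ and every integer $Q\geq 1$, there exist $1\leq q\leq Q^m$ and $\mathbf{p}\in\Z^m$ with $\norm{q\mathbf{x}-\mathbf{p}}_m\leq 1/Q$. Choosing $Q=\lceil \delta^{-\frac{1}{m+1}}\rceil$ yields $\norm{\mathbf{x}-\mathbf{p}/q}_m\leq 1/(qQ)\leq 1/Q^{m+1}<\delta$, hence
\begin{equation*}
Q^m(\mathbf{x},\delta) \leq Q^m \leq C_m\,\delta^{-\frac{m}{m+1}}
\end{equation*}
for some constant $C_m$ depending only on $m$. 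Consequently $\delta^{\frac{m}{m+1}}Q^m(\mathbf{x},\delta)\leq C_m$ uniformly in $\mathbf{x}$ and $\delta$. The tail probability in the integrand is therefore supported on $[0,C_m]$ and bounded by $1$, so the dominated convergence theorem applies and yields
\begin{equation*}
\lim_{\delta\rightarrow 0}\delta^{\frac{m}{m+1}}\mathbb{E}_{\mathbf{x}}\bigl[Q^m(\mathbf{x},\delta)\bigr] = \int_0^{C_m}\mu_{m+1}\bigl(\{\Lambda: F_1^m(\Lambda)\geq T\}\bigr)\,dT = \int_{X_{m+1}} F_1^m(\Lambda)\,d\mu_{m+1}(\Lambda),
\end{equation*}
where the equality on the right uses that the uniform bound passes to the limiting distribution, so that $F_1^m$ is bounded $\mu_{m+1}$-almost everywhere by $C_m$ and in particular integrable. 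The only remaining technicality is that Theorem \ref{cumulative m+1} gives convergence at continuity points of the limiting cumulative distribution function; since this distribution is monotone it has at most countably many discontinuities, which form a set of Lebesgue measure zero in $T$ and therefore do not affect the integral.
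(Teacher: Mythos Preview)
Your overall strategy---rewrite the expectation via the tail formula and feed in Theorem~\ref{cumulative m+1} pointwise---is exactly what the paper does for Corollary~\ref{MeanF1}, and the paper gives no separate proof of the present corollary. However, the step you add to justify dominated convergence is incorrect.

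In the Dirichlet argument you write
\[
\norm{\mathbf{x}-\mathbf{p}/q}_m \leq \frac{1}{qQ} \leq \frac{1}{Q^{m+1}} < \delta.
\]
The middle inequality would require $q\geq Q^m$, but Dirichlet only gives $1\leq q\leq Q^m$; hence in fact $1/(qQ)\geq 1/Q^{m+1}$, and the chain breaks. More importantly, the conclusion you draw---a uniform bound $\delta^{m/(m+1)}Q^m(\mathbf{x},\delta)\leq C_m$ for all $\mathbf{x}$---is simply false. Already for $m=1$ the lattice identity $\sqrt{\delta}\,q_{\min}(x,\delta)=F_1^1(g_{\log\delta}h_x\Z^2)$ from Lemma~\ref{good lemma} shows that the range of $\sqrt{\delta}\,q_{\min}(\cdot,\delta)$ is the range of $F_1^1$ over an expanding horocycle, and $F_1^1$ is unbounded on $X_2$. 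Concretely, if $x$ has a large partial quotient $a_n$, then for $\delta$ just below $|x-p_{n-1}/q_{n-1}|\asymp 1/(q_{n-1}q_n)$ one has $q_{\min}(x,\delta)=q_n$ and $\sqrt{\delta}\,q_n\asymp\sqrt{q_n/q_{n-1}}\asymp\sqrt{a_n}$, which can be arbitrarily large.

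So dominated convergence cannot be obtained from a pointwise bound on the random variable. What is actually needed is a \emph{uniform tail bound} of the form $P\bigl(\delta^{m/(m+1)}Q^m(\cdot,\delta)>T\bigr)\leq g(T)$ with $g$ integrable on $(0,\infty)$ (equivalently, uniform integrability of the family $\{\delta^{m/(m+1)}Q^m(\cdot,\delta)\}_\delta$). The paper's own proof of Corollary~\ref{MeanF1} passes over this point without comment, so supplying a correct tail estimate here would genuinely improve the exposition; but the Dirichlet argument as written does not do the job.
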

We omit the proof of Theorem \ref{cumulative m+1} as it will be a consequence of Theorem~\ref{linear transformations}. 

\subsection{Linear Forms}
Let $m$ and $n$ be positive integers. Endow $\R^m$ and $\R^n$ with their max norms $\norm{\cdot}_m$ and $\norm{\cdot}_n$, respectively. The norms $\norm{\cdot}_m$ and $\norm{\cdot}_n$ induce a norm, $\norm{\cdot}$, on $\R^m\oplus \R^n=\R^{m+n}$ given by $\norm{[\mathbf{u},\mathbf{v}]^T}=\norm{\mathbf{u}}_n+\norm{\mathbf{v}}_m$. Let $X$ be an $n\times m$ matrix with entries in $[0,1]$.
For $\delta>0$, define
$$Q^{m,n}(X,\delta)= \min_{(\mathbf{q},\mathbf{p})\in\Z^n\times\Z^m}\left\{\norm{\mathbf{q}}_n:\norm{X\mathbf{q}-\mathbf{p}}_m<\delta\norm{\mathbf{q}}_n\right\}.$$
Notice that the case when $n=1$, $Q^{m,1}(\mathbf{x},\delta)=Q^m(\mathbf{x},\delta)$ and when $m$ and $n$ are both $1$, $Q^{1,1}(x,\delta)=q_{\min}(x,\delta)$.

We may identify the collection of $m\times n$ matrices with entries in $[0,1]$ with $[0,1]^{mn}$. $Q^{m,n}(X,\delta)$ then models the question: Given a randomly selected $X\in [0,1]^{mn}$, what is the shortest integer vector $\mathbf{q} \in \Z^n$ that lands within an appropriately sized neighborhood of $\Z^m$.
The size of this neighborhood depends itself on the size of $\mathbf{q}$.

In order to study the statistics of $Q^{m,n}$ we will relate it to the theory of lattices just as we did in \S\ref{existence of limiting distribution} and give appropriate generalizations of the geodesic and horocyclic flows.

\subsection{Lattice Interpretation}
%
%

Define $F_\delta^{m,n}:X_{m+n}\rightarrow \R$ by
$$F_\delta^{m,n}(\Lambda)=\min\left\{\norm{\mathbf{u}}_n: \begin{bmatrix}\mathbf{u}\\ \mathbf{v}\end{bmatrix}\in \Lambda\cap C^{m,n}_\delta\right\}$$
where
$$C^{m,n}_\delta=\left\{r\begin{bmatrix}\mathbf{s}\\\mathbf{t}\end{bmatrix}: r>0, \norm{\mathbf{s}}_n=1,\norm{\mathbf{t}}_m<\delta\right\}.$$


\noindent The higher-dimensional versions of the horocyclic and geodesic flows that we will use are the following:
Let $m,n\geq 1$ and let $X\in [0,1]^{mn}$, define
\begin{align*}
h_X^{m,n}=\begin{bmatrix}\Id_n & 0\\
-X & \Id_m\end{bmatrix}&& \text{ and} &&
g_t^{m,n}=\begin{bmatrix}e^{\frac{t}{m+n}}\Id_n & 0\\
0 & e^{\frac{-nt}{m(m+n)}}\Id_m\end{bmatrix}. 
\end{align*}

The next lemma is a higher dimensional analogue of Lemma~\ref{good lemma}.

\begin{Lemma}\label{lemma m,n} With the notation as above,

\begin{equation}\label{m,n homogeneity}
        F_{\delta}^{m,n}(g^{m,n}_t\Lambda)=e^{\frac{t}{m+n}}F_{\delta e^{\frac{t}{m}}}(\Lambda).
    \end{equation}
and
    \begin{equation}\label{Qmn Fmn relation}
        Q^{m,n}(X,\delta)=F_\delta^{m,n}(h^{m,n}_X\Z^{m+n}).
    \end{equation}
\end{Lemma}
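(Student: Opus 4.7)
The plan is to adapt the two computations from Lemma~\ref{good lemma} to the higher-dimensional setting, using the generalized geodesic/horocyclic elements in place of $g_t$ and $h_x$. A convenient preliminary step is to rewrite the cone as
\[
C^{m,n}_\delta = \left\{ \begin{bmatrix}\mathbf{u}\\ \mathbf{v}\end{bmatrix} \in \R^{n+m} : \norm{\mathbf{u}}_n > 0,\ \norm{\mathbf{v}}_m < \delta\norm{\mathbf{u}}_n \right\},
\]
which follows from the given definition by taking $r = \norm{\mathbf{u}}_n$, $\mathbf{s} = \mathbf{u}/r$, $\mathbf{t} = \mathbf{v}/r$. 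Both parts of the lemma are then change-of-variable computations in this presentation.

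For the homogeneity identity, I would parametrize a vector in $g^{m,n}_t\Lambda$ as $g^{m,n}_t[\mathbf{u}', \mathbf{v}']^T$ with $[\mathbf{u}', \mathbf{v}']^T \in \Lambda$. Because $g^{m,n}_t$ scales the first $n$ coordinates by $e^{t/(m+n)}$ and the last $m$ coordinates by $e^{-nt/(m(m+n))}$, the cone membership condition for the image reads
\[
e^{-nt/(m(m+n))}\norm{\mathbf{v}'}_m < \delta\, e^{t/(m+n)}\norm{\mathbf{u}'}_n.
\]
The key arithmetic observation is $\tfrac{t}{m+n} + \tfrac{nt}{m(m+n)} = \tfrac{t}{m}$, which simplifies the condition to $\norm{\mathbf{v}'}_m < \delta e^{t/m}\norm{\mathbf{u}'}_n$, i.e.\ $[\mathbf{u}', \mathbf{v}']^T \in C^{m,n}_{\delta e^{t/m}}$. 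Combined with the scaling $\norm{\mathbf{u}}_n = e^{t/(m+n)}\norm{\mathbf{u}'}_n$, taking the minimum over the relevant lattice points yields the claimed identity. The only real obstacle here is keeping the two distinct exponents straight; it is crucial that they combine cleanly to $t/m$, otherwise the conclusion would not have this tidy form.

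For the relation with $Q^{m,n}$, I would use the bijection between pairs $(\mathbf{q}, \mathbf{p}) \in \Z^n \times \Z^m$ and points of $h^{m,n}_X\Z^{m+n}$ given by $(\mathbf{q}, \mathbf{p}) \mapsto h^{m,n}_X[\mathbf{q}, \mathbf{p}]^T = [\mathbf{q}, \mathbf{p}-X\mathbf{q}]^T$. Such a lattice point lies in $C^{m,n}_\delta$ precisely when $\norm{\mathbf{q}}_n > 0$ and $\norm{\mathbf{p} - X\mathbf{q}}_m < \delta\norm{\mathbf{q}}_n$, which by symmetry of the max norm coincides with $\norm{X\mathbf{q} - \mathbf{p}}_m < \delta\norm{\mathbf{q}}_n$. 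Since the quantity minimized by $F^{m,n}_\delta$ is exactly $\norm{\mathbf{q}}_n$ in this parametrization, taking the minimum recovers the definition of $Q^{m,n}(X, \delta)$ verbatim. This part is essentially the same bookkeeping as in the proof of the second part of Lemma~\ref{good lemma}, with no new ideas beyond replacing the single-variable objects with their higher-rank analogues.
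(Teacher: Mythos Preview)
Your proposal is correct and follows essentially the same approach as the paper. For \eqref{m,n homogeneity} the paper phrases the computation as $g_{-t}^{m,n}C^{m,n}_\delta = C^{m,n}_{\delta e^{t/m}}$ and then changes variables, whereas you check the cone inequality directly after rewriting $C^{m,n}_\delta$ in the convenient form $\{\norm{\mathbf{v}}_m < \delta\norm{\mathbf{u}}_n\}$; for \eqref{Qmn Fmn relation} the paper proves the two inequalities separately while you invoke the bijection $(\mathbf{q},\mathbf{p})\leftrightarrow h^{m,n}_X[\mathbf{q},\mathbf{p}]^T$ once---in both cases the content is identical.
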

\begin{proof} We begin with the proof of equation (\ref{m,n homogeneity}). We first explore the effects of the geodesic flow on the cone $C^{m,n}_\delta$. Just as in Lemma~\ref{good lemma}, we have that $g_t^{m,n}$ expands our cone in the the direction of $\R^m$ by $e^{\frac{t}{m}}$. More precisely, we have 
\begin{equation}
    g_t^{m,n}C^{m,n}_\delta=C^{m,n}_{\delta e^{-\frac{t}{m}}}
\end{equation}

Hence, we have the following computation:
\begin{align*}
    F_\delta^{m,n}(g_t^{m,n}\Lambda)&=\min\left\{\norm{\mathbf{u}}_n:\begin{bmatrix}
        \mathbf{u}\\ \mathbf{v}
    \end{bmatrix}\in g_t^{m,n}\Lambda \cap C_\delta^{m,n}\right\}\\
    &=\min\left\{\norm{\mathbf{u}}_n:g_{-t}^{m,n}\begin{bmatrix}
        \mathbf{u}\\ \mathbf{v}
    \end{bmatrix}\in\Lambda \cap g_{-t}^{m,n}C_\delta^{m,n}\right\}\\
    &=\min\left\{\norm{\mathbf{u}}_n:\begin{bmatrix}
        e^{-\frac{t}{m+n}}\mathbf{u}\\ e^{-\frac{nt}{m(m+n)}}\mathbf{v}
    \end{bmatrix}\in\Lambda \cap C_{\delta e^{\frac{t}{m}}}^{m,n}\right\}\\
    &=e^{\frac{t}{m+n}}\min\left\{\norm{\mathbf{u'}}_n:\begin{bmatrix}
        \mathbf{u'}\\ \mathbf{v'}
    \end{bmatrix}\in\Lambda \cap C_{\delta e^{\frac{t}{m}}}^{m,n}\right\}\\
    &= e^{\frac{t}{m+n}}F^{m,n}_{\delta e^{\frac{t}{m}}}(\Lambda)
\end{align*}
as desired.

We now proceed to prove equation (\ref{Qmn Fmn relation}).
We first show that $Q^{m,n}(X,\delta)\geq F_\delta^{m,n}(h^{m,n}_X\Z^{m+n})$.
Let $(\mathbf{q},\mathbf{p})\in \Z^{n}\times \Z^m$ such that $\norm{X\mathbf{q}-\mathbf{p}}_m<\delta\norm{\mathbf{q}}_n$
and $\norm{\mathbf{q}}_n$ is minimized.
Then by definition, $Q^{m,n}(X,\delta)= \norm{\mathbf{q}}_n$. We then have that $$h_X\begin{bmatrix}
    \mathbf{q}\\ \mathbf{p}
\end{bmatrix}=\begin{bmatrix}
    \mathbf{q} \\ \mathbf{p}-X\mathbf{q}
\end{bmatrix}\in h_X\Z^{m+n}\cap C_\delta^{m,n}
$$
Hence we have that
$$F^{m,n}_\delta(h_X\Z^{m+})\leq \norm{\mathbf{q}}_n=Q_\delta^{m,n}(X,\delta).$$

Next we show that $F^{m,n}_\delta(h_X\Z^{m+n})\geq Q^{m,n}(X,\delta).$
Let $(\mathbf{a},\mathbf{b})\in h_X\Z^{m+n}\cap C_\delta^{m,n}$ such that $\norm{\mathbf{a}}_n$ is minimized. This means $F_\delta^{m,n}(h_X\Z^{m+n})=\norm{\mathbf{a}}_n$.
This means that there exist $(\mathbf{q},\mathbf{p})\in\Z^{n}\oplus\Z^{m}$ such that $$h_X\begin{bmatrix}
    \mathbf{q}\\ \mathbf{p}
\end{bmatrix}=\begin{bmatrix}
    \mathbf{a}\\ \mathbf{b}
\end{bmatrix}$$
That is $\mathbf{a}=\mathbf{q}$ and $\mathbf{p}-X\mathbf{q}=\mathbf{b}$.
Since $$\begin{bmatrix}
    \mathbf{a}\\ \mathbf{b}
\end{bmatrix}=\norm{\mathbf{a}}_n\begin{bmatrix}
    \frac{\mathbf{a}}{\norm{\mathbf{a}}_n}\\ \frac{\mathbf{b}}{\norm{\mathbf{a}}_n}
\end{bmatrix}\in C_\delta^{m,n}$$
it follows that $\norm{\frac{\mathbf{b}}{\norm{\mathbf{a}}_n}}_m<\delta$ which means
$\norm{\mathbf{b}}_m<\delta\norm{\mathbf{a}}_n$. This can be rewritten as $\norm{\mathbf{p}-X\mathbf{q}}_m<\delta\norm{\mathbf{q}}_n$. That is 
$$Q^{m,n}(X,\delta)\leq \norm{\mathbf{q}}_n=\norm{\mathbf{a}}_n=F_\delta^{m,n}(h_X\Z^{m+n}).$$
This completes the proof of Lemma~\ref{lemma m,n}.
\end{proof}

Notice that when $t=-\log(\delta^m)$, equation (\ref{m,n homogeneity}) says that
\begin{equation}\label{highergeohomogeneity}
    F_\delta^{m,n}(g^{m,n}_{-\log(\delta^m)}\Lambda)=\delta^{-\frac{m}{m+n}}F_1^{m,n}(\Lambda).
\end{equation}

Before proceeding to state the main theorem of this section we state an equidistribition theorem due to Kleinbock and Margulis~\cite{KleinMarg} which will play the roll Theorem~\ref{equidistribution} played in the proof of Theorem~\ref{cumulative 2}.
\newline
\paragraph*{\bf Almost uniformly continuous functions.} Let $(Y,\mu)$ be a topological space equipped with its Borel $\sigma$-algebra and a measure $\mu$. We say a function $\Psi: Y\rightarrow \R$ is \emph{almost uniformly continuous} if there exist sequences $\{\Psi_i\}_{i\in \N}$ and $\{\Psi^j\}_{j\in\N}$ of uniformly continuous functions on $Y$ such that $\Psi_i$ increases almost surely to $\Psi$ and $\Psi_j$ decreases almost surely to $\Psi$. In particular, with the setup above, if $\mu$ is a regular probability measure and $A$ is a measurable subset of $Y$ with $\mu(\partial A)=0$, then the indicator function of $A$, $\chi_A$, is almost uniformly continuous.

\begin{Lemma}\textbf{(Kleinbock-Margulis~\cite{KleinMarg})}
\label{KleibockMargulis}
Let $f\in L^2(M_{m\times n}(\R))$ with compact support. Then for any almost uniformly continuous $\Psi\in L^2(X_{m+n})$, any compact subset L of $X_{m+n}$, and any $\eps>0$, the exists $T>0$ such that
\begin{equation}
 \abs{\int_{M_{m\times n}(\R)}f(h_X)\Psi(g_th_X\Lambda)dX-\int_{M_{m\times n}(\R)}f(h_X)dX \int_{X_{m+n}}\Psi(\Lambda)d\mu_{m+n}(\Lambda)}<\eps
\end{equation}
for any $\Lambda\in L$ and all $t>T$.
\end{Lemma}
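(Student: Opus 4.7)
The plan is to deduce the lemma from mixing of the $g_t^{m,n}$-action on $(X_{m+n}, \mu_{m+n})$ via the classical Margulis ``thickening'' technique, which converts a horospherical integral into a $G$-integral on $G = SL(m+n, \R)$ at a small cost controlled by uniform continuity of $\Psi$.

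First I would reduce to the case where $\Psi$ is uniformly continuous and bounded. By definition of almost uniform continuity, sandwich $\Psi_i \leq \Psi \leq \Psi^j$ with $\Psi_i, \Psi^j$ uniformly continuous and converging to $\Psi$ almost surely, so that by monotone convergence both $\int \Psi_i\, d\mu_{m+n}$ and $\int \Psi^j\, d\mu_{m+n}$ approach $\int \Psi\, d\mu_{m+n}$. A standard sandwich argument then reduces the lemma for $\Psi$ (with error $\epsilon$) to the corresponding statement applied to uniformly continuous functions with slightly smaller error parameter.

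Next, apply the thickening. Locally near the identity, $G$ admits a product decomposition $G = H \cdot P$, where $H = \{h_X^{m,n} : X \in M_{m\times n}(\R)\}$ and $P$ is the parabolic complement generated by the diagonal $A = \{g_t^{m,n}\}$ and the opposite horospherical subgroup $H^-$. Pick a smooth bump $\phi_\eta$ on $P$ with $\int_P \phi_\eta\, dp = 1$, supported in a small $\eta$-neighborhood of the identity, and insert it:
\begin{equation*}
\int_H f(h)\, \Psi(g_t h \Lambda)\, dh = \int_H \int_P f(h)\, \phi_\eta(p)\, \Psi(g_t h \Lambda)\, dp\, dh.
\end{equation*}
Replace $\Psi(g_t h \Lambda)$ by $\Psi(p\, g_t h \Lambda)$; this is legitimate because $p$ lies near the identity and uniform continuity of $\Psi$ bounds the error. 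Writing $p\, g_t h \Lambda = g_t (g_{-t} p\, g_t)\, h \Lambda$, the conjugate $g_{-t} p\, g_t$ stays in a bounded neighborhood of the identity for $p \in P$ in the appropriate horospherical component, so after changing variables the double integral equals, up to bounded Jacobians, $\int_G F(g)\, \Psi(g_t g \Lambda)\, dg$, where $F$ is a compactly supported $L^2$ function on $G$ with $\int_G F\, dg = \int_{M_{m\times n}(\R)} f(h_X)\, dX$.

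Finally, apply mixing. Since $SL(m+n, \R)$ is semisimple and $g_t^{m,n}$ is a nontrivial one-parameter subgroup, by the Howe--Moore theorem the $g_t^{m,n}$-action on $(X_{m+n}, \mu_{m+n})$ is mixing, so
\begin{equation*}
\int_G F(g)\, \Psi(g_t g \Lambda)\, dg \longrightarrow \int_G F\, dg \cdot \int_{X_{m+n}} \Psi\, d\mu_{m+n}
\end{equation*}
as $t \to \infty$, yielding the desired bound after combining with the thickening estimate. The main obstacle will be uniformity over $\Lambda \in L$: the thickening error is intrinsically uniform since it depends only on the modulus of continuity of $\Psi$ and on group-theoretic contraction rates, but uniformity of the mixing step on compact $L$ requires a separate argument. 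This is handled by covering $L$ with finitely many small sets, using continuity of the map $\Lambda \mapsto \int_G F(g)\, \Psi(g_t g \Lambda)\, dg$, and applying pointwise mixing at representative points of the cover.
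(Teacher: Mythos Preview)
The paper does not give its own proof of this lemma; it is quoted from Kleinbock--Margulis and used as a black box in the proof of Theorem~\ref{linear transformations}. So there is no in-paper argument to compare against.

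That said, your outline is the standard proof and is essentially correct: reduce to bounded uniformly continuous $\Psi$ by sandwiching, thicken the $H$-integral to a $G$-integral with a bump on the complementary subgroup, and then invoke Howe--Moore mixing on $L^2(X_{m+n})$. One small correction in the thickening step: the bump should be inserted on the other side. Writing locally $G=P\cdot H$ and $\widetilde f(ph)=\phi_\eta(p)f(h)$, you compare $\Psi(g_t p h\Lambda)=\Psi\big((g_t p g_{-t})\,g_t h\Lambda\big)$ with $\Psi(g_t h\Lambda)$; since $P$ is the \emph{non-expanding} part for the relevant direction of $g_t$, it is $g_t p g_{-t}$ that stays near the identity (not $g_{-t} p g_t$ as you wrote), and uniform continuity of $\Psi$ then controls the error. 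Your plan for uniformity over the compact set $L$ is also the right idea; a clean way to make it precise is to observe that the unfolded functions $y\mapsto\sum_{\gamma}\widetilde f(g\gamma)$, viewed as elements of $L^2(X_{m+n})$, depend continuously on the basepoint $\Lambda$, hence form a norm-precompact family as $\Lambda$ ranges over $L$, on which the Howe--Moore convergence is automatically uniform.
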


Before proving Theorem~\ref{linear transformations}, we give some notation and a lemma which will be useful in the proof.

    \begin{enumerate}
        \item $A_T=\{\Lambda\in X_{m+n}: F_1^{m,n}(\Lambda)\leq T$\},
        \item $B_T=X_{m+n}\setminus A_T$,
        \item $C^R=\overline{C_1^{m,n}}\cap \{[\mathbf{u},\mathbf{v}]^T\in \R^n\oplus\R^m: \norm{\mathbf{u}}_n\leq R\},$
        \item For each $\Lambda\in X_{m+n}$, $\Lambda^*=\Lambda\setminus \{\mathbf{0}\}$.
    \end{enumerate}


\begin{Lemma}\label{Study of A_T}
   With the notation as above, the following are true:
   \begin{equation}\label{Closure of A_T}
       \overline{A_T}\subset A_T\cup \{\Lambda\in X_{m+n}: \Lambda^*\cap \partial C_1^{m,n}\neq \emptyset\}, 
   \end{equation}

   \begin{equation}\label{Boundary of A_T}
       \partial A_T\subset \{\Lambda\in X_{m+n}: F_1^{m,n}= T\}\cup \{\Lambda\in X_{m+n}: \Lambda^*\cap \partial C_1^{m,n}\neq \emptyset\}
   \end{equation}
   and
   \begin{equation}\label{measure of boundary of A_T}
       \mu_{m+n}(\partial A_T)=0.
   \end{equation}
\end{Lemma}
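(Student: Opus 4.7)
The plan is to prove the three containments (\ref{Closure of A_T}), (\ref{Boundary of A_T}), (\ref{measure of boundary of A_T}) in order, reducing the final measure statement to Siegel's integration formula.

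First, for (\ref{Closure of A_T}) I would use a lattice-point convergence argument. Take any $\Lambda\in\overline{A_T}$ and a sequence $\Lambda_k\to\Lambda$ in $X_{m+n}$ with $\Lambda_k\in A_T$. The infimum defining $F_1^{m,n}(\Lambda_k)$ is attained because $\overline{C_1^{m,n}}\cap\{\norm{\mathbf{u}}_n\leq T\}$ is bounded and hence meets the discrete lattice in a finite set. Pick minimizers $w_k=[\mathbf{u}_k,\mathbf{v}_k]^T\in\Lambda_k\cap C_1^{m,n}$ with $\norm{\mathbf{u}_k}_n\leq T$ (which forces $\norm{\mathbf{v}_k}_m<T$ by the cone condition). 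Writing $\Lambda_k=g_k\Z^{m+n}$ with $g_k\to g$, the identification $w_k=g_k z_k$ gives a bounded sequence of integer vectors, eventually constant along a subsequence, so $w_k\to w=gz\in\Lambda$. Continuity of the systole on $X_{m+n}$ precludes $w=0$, and passing to the limit gives $w\in\overline{C_1^{m,n}}$ with $\norm{\mathbf{u}}_n\leq T$. Either $w\in C_1^{m,n}$, witnessing $\Lambda\in A_T$, or $w\in\partial C_1^{m,n}$, placing $\Lambda$ in the second set on the right of (\ref{Closure of A_T}).

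For (\ref{Boundary of A_T}) I would decompose $\partial A_T=(\AT\setminus A_T)\cup(A_T\setminus\Int(A_T))$. The first piece is absorbed into $\{\Lambda:\Lambda^*\cap\partial C_1^{m,n}\neq\emptyset\}$ by (\ref{Closure of A_T}). For the second, I would show that $F_1^{m,n}(\Lambda)<T$ forces $\Lambda\in\Int(A_T)$: a minimizer $w$ of $F_1^{m,n}(\Lambda)$ necessarily lies in the \emph{open} cone $C_1^{m,n}$, and running the lattice-point argument of the previous paragraph in reverse, every nearby lattice $\Lambda'$ carries a vector close to $w$, still in $C_1^{m,n}$, with first-block norm close to $F_1^{m,n}(\Lambda)<T$. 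Hence $A_T\setminus\Int(A_T)\subset\{\Lambda:F_1^{m,n}(\Lambda)=T\}$, yielding (\ref{Boundary of A_T}).

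Finally, (\ref{measure of boundary of A_T}) follows by applying Siegel's integration formula,
$$\int_{X_{m+n}}\sum_{v\in\Lambda^*}f(v)\,d\mu_{m+n}(\Lambda)=\int_{\R^{m+n}}f(x)\,dx,$$
to the indicator functions of the two codimension-one (hence Lebesgue-null) sets $\partial C_1^{m,n}$ and $\{[\mathbf{u},\mathbf{v}]^T\in\overline{C_1^{m,n}}:\norm{\mathbf{u}}_n=T\}$ in $\R^{m+n}$. Both integrals vanish, so $\mu_{m+n}$-almost every lattice avoids each set; combined with (\ref{Boundary of A_T}) this gives $\mu_{m+n}(\partial A_T)=0$. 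The hard part is really (\ref{Closure of A_T}): one must take care to verify both the attainment of the minimum in $F_1^{m,n}$ (discreteness plus boundedness of the truncated cone) and the non-vanishing of the limit vector $w$ (continuity of the systole on $X_{m+n}$), since without the latter the limit could escape through the origin rather than landing in the closed cone.
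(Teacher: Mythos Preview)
Your proof is correct. It diverges from the paper primarily in the proof of (\ref{Closure of A_T}): the paper argues by contradiction, assuming $\Lambda\in\overline{A_T}$ with $\Lambda^*\cap\partial C_1^{m,n}=\emptyset$ and $F_1^{m,n}(\Lambda)=T_0>T$, then using finiteness of $\Lambda$ in a large ball and continuity of the $SL(m+n,\R)$-action to trap $F_1^{m,n}$ near $T_0$ on a neighborhood of $\Lambda$, contradicting approximation by lattices in $A_T$. You instead extract directly: pick minimizers $w_k\in\Lambda_k\cap C_1^{m,n}$, pass to an eventually-constant integer preimage subsequence, and obtain a nonzero limit $w\in\Lambda\cap\overline{C_1^{m,n}}$ with $\norm{\mathbf{u}}_n\le T$. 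Your route is more constructive and recycles neatly for (\ref{Boundary of A_T}), where you show $\{F_1^{m,n}<T\}\subset\Int(A_T)$ by perturbing the open-cone minimizer; the paper instead writes $\partial A_T=\overline{A_T}\cap\overline{B_T}$ and reruns its closure argument for the complement $B_T$. For (\ref{measure of boundary of A_T}) the two arguments coincide --- the paper asserts without naming it that a Lebesgue-null subset of $\R^{m+n}$ is missed by $\mu_{m+n}$-a.e.\ lattice, while you make the Siegel step explicit. One small remark: in your extraction the nonvanishing of $w$ already follows from $w=gz$ with $z\neq0$, so the systole appeal is redundant.
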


\begin{proof}
We begin by proving equation 
(\ref{Closure of A_T}). Fix $T>0$. Let $\Lambda\in \overline{A_T}$. Suppose $\Lambda\in \overline{A_T}$ and 
\newline
$\Lambda\notin\left\{\Lambda\in X_{m+n}: \Lambda^*\cap \partial C_1^{m,n}\neq \emptyset\right\}$. If $F_1^{m,n}(\Lambda)\leq T$, we are done, so suppose $F_1^{m,n}(\Lambda)=T_0>T$. Let $B$ be the closed ball in $\R^{m+n}$ centered at the origin with radius $R$, where $R$ is chosen such that $C^{T_0}\subset \operatorname{Int}(B)$ and $\partial B\cap \Lambda=\emptyset$. Since $B$ is compact, $\Lambda$ is discrete, and $F_1^{m,n}(\Lambda)=T_0<R$, it follows that $B\cap \Lambda$ is finite and non-empty. This implies that there exists $\delta>0$ such that if $[\mathbf{u},\mathbf{v}]^T\in B\cap\Lambda$ with $\norm{\mathbf{u}}_n\in (T_0-\delta,T_0+\delta)$, then $\norm{\mathbf{u}}_n=T_0$. Let $d=\operatorname{dist}(\partial B, \Lambda)$. $d>0$ by our choice of $R$. Define $\eta=\frac{1}{2}\min\{d,\delta,1,T_0-T\}$. Consider $B'$ to be the ball centered at the origin with radius $2R$. Since $\Lambda\cap B'$ is finite and the action of $SL(m+n,\R)$ on $\R^{m+n}$ is continuous, there exists a neighborhood $U$ of $\Id\in SL(m+n,\R)$ such that if $\mathbf{w}\in B'\cap \Lambda$ and $g\in U$, $\norm{g\mathbf{w}-\mathbf{w}}<\eta$. Since $\Lambda\in \overline{A_T}$, there exists a sequence $\Lambda_i\in A_T$ converging to $\Lambda$. Since $\Lambda_i$ is converging to $\Lambda$, we can write $\Lambda_i=g_i\Lambda$ where $g_i\in U$ for $i$ large enough. Then this implies that
$$\abs{T_0-T}\leq\abs{F_1^{m,n}(\Lambda)-F_1^{m,n}(\Lambda_i)}< \eta<\abs{T_0-T}.$$
This is a contradiction. This means our assumption that $F_1^{m,n}(\Lambda)=T_0>T$ is false. Hence, $F_1^{m,n}(\Lambda)\leq T$, so $\Lambda\in A_T$. This completes the proof of equation (\ref{Closure of A_T}).

We now proceed to prove equation (\ref{Boundary of A_T}). One can use the a similar argument as the one used in the proof of equation (\ref{Closure of A_T}) to show that \begin{equation*}\label{Closure of B_T}
    \overline{B_T}\subset B_T\cup \{\Lambda\in X_{m+n}: \Lambda^*\cap \partial C_1^{m,n}\neq \emptyset\}.
\end{equation*}
This then implies
\begin{equation}
    \partial A_T=\overline{A_T}\cap \overline{B_T}\subset \left\{\Lambda\in X_{m+n}: F_1^{m,n}(\Lambda)= T\right\}\cup\left\{\Lambda\in X_{m+n}: \Lambda^*\cap \partial C_1^{m,n}\neq \emptyset\right\}
\end{equation}

Finally, we prove equation (\ref{measure of boundary of A_T}). Let $D=\left\{[\mathbf{u},\mathbf{v}]^T\in \R^{m+n}: \norm{\mathbf{u}}_n=T\right\}\cup \partial C_1^{m,n}.$ $D$ has Lebesgue measure $0$ on $\R^{m
+n}$, this then implies that $\{\Lambda\in X_{m+n}: \Lambda^*\cap D\neq\emptyset\}$ has measure $0$. Since 
    \begin{align}\label{containment of boundary of A_T}
        \partial A_T&=\overline{A_T}\cap \overline{B_T}\subset \left\{\Lambda\in X_{m+n}: F_1^{m,n}(\Lambda)= T\right\}\cup\left\{\Lambda\in X_{m+n}: \Lambda^*\cap \partial C_1^{m,n}\neq \emptyset\right\}\\
        &=\{\Lambda\in X_{m+n}: \Lambda^*\cap D\neq\emptyset\},
    \end{align}

we have that $\partial A_T$ has measure zero.

\end{proof}

We are now ready to state and prove Theorem~\ref{linear transformations}
\begin{theorem} \label{linear transformations}
    Let $P$ be the uniform probability measure on $[0,1]^{mn}$. Then as $\delta\rightarrow 0$,
    $$P\left(\left\{X\in M_{m\times n}([0,1]):\delta^{\frac{m}{m+n}}Q^{m,n}(X,\delta)\leq T\right\}\right)\rightarrow \mu_{m+n}\left(\left\{ \Lambda\in X_{m+n}: F_1^{m,n}(\Lambda)\leq T\right\}\right)$$
\end{theorem}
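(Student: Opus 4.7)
I would mirror the proof of Theorem~\ref{cumulative 2}, converting the event about $\delta^{m/(m+n)}Q^{m,n}(X,\delta)$ into an event about $F_1^{m,n}$ on a lattice translate, and then closing via equidistribution. The essential change from the two-dimensional case is that periodic horocycle orbits are no longer available in general, so Dani--Smillie will be replaced by the Kleinbock--Margulis mixing estimate (Lemma~\ref{KleibockMargulis}).

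The first step is to combine (\ref{Qmn Fmn relation}) with the rearrangement $\delta^{m/(m+n)} F_\delta^{m,n}(\Lambda')=F_1^{m,n}(g^{m,n}_{\log(\delta^m)}\Lambda')$ of (\ref{highergeohomogeneity}) (obtained by substituting $\Lambda \mapsto g^{m,n}_{\log(\delta^m)}\Lambda'$), applied at $\Lambda'=h^{m,n}_X\Z^{m+n}$, to obtain
\[
\delta^{\frac{m}{m+n}}Q^{m,n}(X,\delta)=F_1^{m,n}\!\bigl(g^{m,n}_{\log(\delta^m)}h^{m,n}_X\Z^{m+n}\bigr).
\]
Setting $A_T=\{\Lambda:F_1^{m,n}(\Lambda)\le T\}$ as in Lemma~\ref{Study of A_T}, this recasts the probability on the left of the theorem as
\[
\int_{M_{m\times n}(\R)}\chi_{[0,1]^{mn}}(X)\,\chi_{A_T}\!\bigl(g^{m,n}_{\log(\delta^m)}h^{m,n}_X\Z^{m+n}\bigr)\,dX,
\]
which exactly matches the integrand of Lemma~\ref{KleibockMargulis} with $f=\chi_{[0,1]^{mn}}$ (compactly supported), $\Psi=\chi_{A_T}\in L^2(X_{m+n})$, $\Lambda=\Z^{m+n}$ (lying in the compact singleton $L=\{\Z^{m+n}\}$), and $t=\log(\delta^m)$ with $|t|\to\infty$ as $\delta\to 0$. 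Lemma~\ref{KleibockMargulis} then delivers the limit $\int f\,dX\cdot\mu_{m+n}(A_T)=\mu_{m+n}(A_T)$, which is the desired conclusion.

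The main obstacle will be verifying the almost uniform continuity of $\chi_{A_T}$ demanded by Lemma~\ref{KleibockMargulis}. This is precisely the role played by Lemma~\ref{Study of A_T}: its conclusion $\mu_{m+n}(\partial A_T)=0$ from equation~(\ref{measure of boundary of A_T}) allows one to sandwich $\chi_{A_T}$ between uniformly continuous functions obtained by mollifying the indicators of $A_T^\circ$ and $\overline{A_T}$ against small bumps, and these two sandwiching sequences converge to $\chi_{A_T}$ off the null set $\partial A_T$. Once this is checked, the remaining work is bookkeeping among the flow relations already packaged in Lemma~\ref{lemma m,n}. Equivalently, the whole argument can be phrased in the Siegel-measure formalism of \S\ref{Siegel Measures}: the pushforward measures $\lambda_\delta$ on $X_{m+n}$ defined by $X\mapsto g^{m,n}_{\log(\delta^m)}h^{m,n}_X\Z^{m+n}$ converge weak-$\ast$ to $\mu_{m+n}$ by Kleinbock--Margulis, and Theorem~\ref{general equidistribution} applied with $\nu(\Lambda)=\sum_{v\in\Lambda}\delta_v$ and $S_T=\overline{C^{m,n}_1}\cap\{\norm{\mathbf{u}}_n\le T\}$ then yields the stated limit.
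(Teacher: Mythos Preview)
Your proposal is correct and matches the paper's own proof essentially step for step: use (\ref{Qmn Fmn relation}) and (\ref{highergeohomogeneity}) to rewrite the event as $\{F_1^{m,n}(g_t^{m,n}h_X^{m,n}\Z^{m+n})\le T\}$, invoke Lemma~\ref{Study of A_T} for $\mu_{m+n}(\partial A_T)=0$ so that $\chi_{A_T}$ is almost uniformly continuous, and apply Lemma~\ref{KleibockMargulis} with $f=\chi_{[0,1]^{mn}}$, $\Psi=\chi_{A_T}$, $\Lambda=\Z^{m+n}$. Your alternative Siegel-measure phrasing is precisely the framework of \S\ref{Siegel Measures}, so that rewording also coincides with the paper's viewpoint.
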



%
%
%
%

\begin{proof}

Equation (\ref{Qmn Fmn relation}), allows us to interchange
$P\left(\left\{X\in M_{m\times n}([0,1]):\delta^{\frac{m}{m+n}}Q^{m,n}(X,\delta)\leq T\right\}\right)$ for $P\left(\left\{X\in M_{m\times n}([0,1]):\delta^{\frac{m}{m+n}}F^{m,n}_\delta(h_X\Z^{m+n})\leq T\right\}\right).$
By equation (\ref{measure of boundary of A_T}) we get that $\chi_{A_T}$ is almost uniformly continuous.

Using equation (\ref{highergeohomogeneity}), we get that 
\begin{align*}
   \textcolor{white}{fds}& P\left(\left\{X\in M_{m\times n}([0,1]) :\delta^{\frac{m}{m+n}} F^{m,n}_\delta(h_X\Z^{m+n})\leq T\right\}\right) \\ 
   &\hspace{0.3in}=P\left(\left\{X\in M_{m\times n}([0,1]):F^{m,n}_\delta(g_{-\log(\delta^m)}h_X\Z^{m+n})\leq T\right\}\right)\\ &\hspace{0.3in}=\int_{[0,1]^{mn}}\chi_{A_T}\left(g_{-\log(\delta^m)}h_X\Z^{m+n}\right)dP(X). 
\end{align*}
We can then apply Lemma~\ref{KleibockMargulis} by setting $f=\chi_{[0,1]^{mn} }$ and  $\Psi= \chi_{A_T}$.

$$\lim_{\delta\rightarrow 0}\int_{[0,1]^{mn}}\chi_{[0,1]^{mn}}(X)\chi_A\left(g_{-\log(\delta^m)}h_X\Z^{m+n}\right)dP(X)=\int_{X_{m+n}}\chi_{A_T}(\Lambda)d\mu_{m+n}(\Lambda)=\mu(A_T).$$
as claimed.
\end{proof}
\begin{corollary}
    $$\lim_{\delta\rightarrow 0}\mathbb{E}_X\left[\delta^{\frac{m}{m+n}}Q^{m,n}(X,\delta)\right]=\int_{X_{m+n}}F_1^{m,n}(\Lambda)d\mu_{m+n}(\Lambda).$$
\end{corollary}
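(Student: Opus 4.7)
The plan is to apply the ``layer cake'' identity and Theorem~\ref{linear transformations}, mirroring the proof of Corollary~\ref{MeanF1} with $\sqrt{\delta}$ replaced by $\delta^{m/(m+n)}$. Explicitly, I would write
\[
\mathbb{E}_X\!\left[\delta^{\frac{m}{m+n}} Q^{m,n}(X,\delta)\right] = \int_0^\infty P\!\left(\left\{X \in [0,1]^{mn} : \delta^{\frac{m}{m+n}} Q^{m,n}(X,\delta) \geq T\right\}\right) dT,
\]
and observe that for each fixed $T>0$, Theorem~\ref{linear transformations} (applied to both $\{\,\cdot \leq T\}$ and its complement, using that $\mu_{m+n}(\partial A_T) = 0$ from Lemma~\ref{Study of A_T}) gives that the integrand converges pointwise to $\mu_{m+n}(\{\Lambda : F_1^{m,n}(\Lambda) \geq T\})$ as $\delta \to 0$. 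Exchanging the limit with the $T$-integral and applying Fubini to rewrite
\[
\int_0^\infty \mu_{m+n}(\{F_1^{m,n} \geq T\})\,dT = \int_{X_{m+n}} F_1^{m,n}(\Lambda)\,d\mu_{m+n}(\Lambda)
\]
would then yield the corollary.

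The main obstacle is justifying the interchange of $\lim_{\delta \to 0}$ with the outer $T$-integral, since the cumulative distributions converge only pointwise in $T$ and the target $F_1^{m,n}$ is unbounded on $X_{m+n}$. I expect to handle this by dominated convergence, using a uniform-in-$\delta$ tail bound on $T \mapsto P(\{\delta^{m/(m+n)} Q^{m,n}(X,\delta) \geq T\})$. Such a bound can be produced through the equivariant-process framework of \S\ref{Siegel Measures}: the measures $\lambda_\delta$ obtained by pushing forward $P$ under $X \mapsto g^{m,n}_{-\log\delta^m} h^{m,n}_X \Z^{m+n}$ all arise from a single equivariant process map $\nu(\Lambda) = \sum_{x \in \Lambda}\delta_x$, and Siegel's mean value theorem controls the expected number of lattice points in the truncated cone $C_1^{m,n} \cap \{\|\mathbf{u}\|_n \leq T\}$, giving both $F_1^{m,n} \in L^1(\mu_{m+n})$ and a uniform tail control across the family $\{\lambda_\delta\}$.

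A more elementary alternative, which I would try first, is to establish a uniform Dirichlet-type bound of the form $\delta^{m/(m+n)} Q^{m,n}(X,\delta) \leq C$ for a constant $C$ independent of $X \in [0,1]^{mn}$ and of sufficiently small $\delta>0$. If available, this would reduce the outer integral to $[0,C]$ and make bounded convergence immediate. Such a bound is the multidimensional analogue of Dirichlet's theorem for simultaneous Diophantine approximation applied to the lattice $h^{m,n}_X \Z^{m+n}$, and verifying it (with the strict cone inequality $\|\mathbf{v}\|_m < \delta\|\mathbf{u}\|_n$ rather than the standard $\|\mathbf{v}\|_m \leq \delta$) is the only genuinely non-trivial step; the rest of the argument is straightforward bookkeeping built on Theorem~\ref{linear transformations}.
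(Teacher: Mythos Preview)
Your layer-cake strategy is exactly what the paper uses for Corollary~\ref{MeanF1}, and the present corollary is stated in the paper without proof, evidently intending the same argument; the paper itself never justifies the interchange of $\lim_{\delta\to 0}$ with the $T$-integral. So at the level of overall strategy you are aligned with (and more scrupulous than) the paper.

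The real problem is with your ``more elementary alternative'': there is no uniform Dirichlet-type bound $\delta^{m/(m+n)} Q^{m,n}(X,\delta) \le C$, already for $m=n=1$. Take $Q\in\N$ large, set $x = 1/(2Q)$ and $\delta = 1/(2Q) - \varepsilon$ with $0 < \varepsilon < 1/(Q(Q+1))$. Then $(x-\delta,x+\delta) = (\varepsilon,\, 1/Q - \varepsilon)$ contains no rational $p/q$ with $1 \le q \le Q$ (any such fraction has $p/q \ge 1/Q$ or $p/q \le 0$) but does contain $1/(Q+1)$; hence $q_{\min}(x,\delta) = Q+1$ and $\sqrt{\delta}\, q_{\min}(x,\delta) \sim (Q+1)/\sqrt{2Q} \to \infty$ as $Q\to\infty$. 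Geometrically, the cone $C_1^{m,n}$ contains no origin-symmetric convex body, so no Minkowski-type argument can produce a pointwise bound on $F_1^{m,n}$. You are therefore forced onto the tail-bound route; note, though, that Siegel's first-moment formula controls $\mathbb{E}[\#(\Lambda^* \cap S_T)]$ and hence $\mu_{m+n}(F_1^{m,n} \le T)$ from above, which is the wrong direction for bounding $P(F_1^{m,n} > T)$. A second-moment (Rogers-type) input, or a direct estimate of $\lambda_\delta(\{F_1^{m,n} > T\})$ along the expanding horospherical orbits, is what is actually needed to manufacture an integrable dominating function.
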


\section{Short saddle connections}\label{Saddle Connections}
\subsection{The $SL(2,\R)$ action on the Moduli Space of Translation Surfaces and Veech Surfaces}
Given $g>0$ and an integer partition $\alpha$ of $2g-2$, we define $\mathcal{H}(\alpha)$ to be the moduli space of translation surfaces $\omega$ with genus $g$, area 1, and zeros with orders given by $\alpha$. The space $\mathcal{H}(\alpha)$ has a natural topology and finite Borel measure $\mu_{MSV}$. Zorich~\cite{Zorich} contains a detailed discussion of this structure on $\mathcal{H}(\alpha)$.

Let $s_1$ and $s_2$ be two parallel segments of the same length on $\C$. If $g\in SL(2,\R)$, then it follows that $g(s_1)$ and $g(s_2)$ are also two parallel segments of the same length on $\C$. In particular, given that we may think of translation surfaces as polygons on the plane with identifications along its sides via Euclidean translation, it follows that the action of $SL(2,\R)$ on $\C$ transfers to an action on our translation surface.
The action of $SL(2,\R)$ is continuous and ergodic on each connected component of $\mathcal{H}(\alpha)$. 

In this section we are particularly interested in translation surfaces which exhibit a large number of symmetries. To be precise, we provide the following definitions.
\begin{Definition}
    Let $\omega$ be a translation surface. The Veech group of $\omega$ is the stabilizer of $\omega$ under the $SL(2,\R)$ action.
\end{Definition}
\begin{Definition}
    We say $\omega$ is a Veech surface if its Veech group is a discrete subgroup of $SL(2,\R)$, where $SL(2,\R)/\Gamma_\omega$ has finite volume.
\end{Definition}

For simplicity we will denote $SL(2,\R)/\Gamma_\omega$ by $Y_\omega$.
In particular, $Y_\omega$ parameterizes the orbit of $\omega$ under $SL(2,\R)$. It turns out the the quotient space $Y_\omega$ is never compact. With this in mind, we state an important theorem due to Dani-Smillie~\cite{DaniSmillie}.

\begin{theorem}\label{VeechEqui}\textbf{(Dani-Smillie~\cite{DaniSmillie})}
    Let $\omega$ be a Veech surface and $(\omega_i)_{i\in\N}$ be a sequence of points in $Y_\omega$. Suppose $\omega_i$ has period $s_i$ under the the action of the horocyclic flow. Let $\nu_i$ be the uniform measure on the orbit of $\omega_i$. If $s_i\rightarrow \infty$, then
    $$\nu_i\stackrel{\ast}{\rightharpoonup} \frac{1}{\mu_{\omega}(Y_\omega)}\mu_\omega,$$
    where $\mu_\omega$ is the Haar measure on $Y_\omega$.
\end{theorem}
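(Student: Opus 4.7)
The plan is to adapt the Dani-Smillie argument already invoked for $X_2 = SL(2,\R)/SL(2,\Z)$ in Theorem~\ref{equidistribution} to the lattice $\Gamma_\omega \subset SL(2,\R)$. The essential input is that $\Gamma_\omega$ is a finite covolume Fuchsian group, so $Y_\omega$ is a finite area but non compact homogeneous space with finitely many cusps, and the horocycle dynamics on it admits the same kind of clean measure classification as in the $SL(2,\Z)$ setting.

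First I would extract a weak-$\ast$ subsequential limit $\nu$ of the $\nu_i$ on the one point compactification of $Y_\omega$ and show that $\nu(Y_\omega) = 1$. The nontrivial point is to rule out escape of mass into the cusps, for which I would invoke the Dani-Margulis non-divergence theorem for unipotent flows: using the systole (length of the shortest saddle connection of the underlying translation surface) as a proper function on $Y_\omega$, one shows that each periodic horocycle of period $s_i$ spends at least a $(1-\eps)$-fraction of its length inside a fixed compact $K_\eps \subset Y_\omega$, with $K_\eps$ independent of $i$. Since each $\nu_i$ is $h_s$-invariant, so is the limit $\nu$, and by Dani's classification of $h_s$-invariant Borel probability measures on $SL(2,\R)/\Gamma$ for a lattice $\Gamma$, the measure $\nu$ must be a convex combination of the normalized Haar measure on $Y_\omega$ and of uniform measures supported on closed $h_s$-orbits.

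It remains to eliminate the closed orbit components. A closed $h_s$-orbit in $Y_\omega$ has a fixed finite period, whereas the orbits $\mathcal{O}_i$ carrying $\nu_i$ have periods $s_i \to \infty$; the uniform measure on $\mathcal{O}_i$ can only charge a fixed closed horocycle $\mathcal{O}$ of finite period if a definite fraction of $\mathcal{O}_i$ lies in an arbitrarily small neighborhood of $\mathcal{O}$, which is ruled out by combining the non-divergence estimate with the fact that closed horocycles of distinct periods are disjoint and separated in $Y_\omega$. Hence $\nu$ is the normalized Haar measure, and since the subsequence was arbitrary the full sequence $\nu_i$ converges weak-$\ast$. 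The main obstacle is the non-divergence step, because it requires the polynomial non-divergence machinery of Dani and Margulis adapted to the Veech group $\Gamma_\omega$, with the systole playing the role of the height function; once this is in hand, the remaining steps follow the $SL(2,\Z)$ template essentially verbatim.
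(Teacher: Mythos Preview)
The paper does not give its own proof of this statement: Theorem~\ref{VeechEqui} is quoted as a result of Dani--Smillie~\cite{DaniSmillie} and used as a black box in the proof of Theorem~\ref{cummulative sc}, just as Theorem~\ref{equidistribution} was used in the proof of Theorem~\ref{cumulative 2}. So there is no proof in the paper against which to compare your proposal.

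That said, your sketch is a faithful outline of the actual Dani--Smillie argument for a general lattice $\Gamma\subset SL(2,\R)$: one passes to a subsequential weak-$\ast$ limit, uses non-divergence of unipotent orbits to prevent escape of mass into the cusps, invokes Dani's classification of $h_s$-invariant probability measures on $SL(2,\R)/\Gamma$ (Haar plus closed-orbit pieces), and then rules out the closed-orbit components using that their periods are bounded while $s_i\to\infty$. One minor caveat: your elimination of the periodic components is stated a bit loosely. The cleaner way, and the one Dani--Smillie use, is to observe that for any fixed closed horocycle $\mathcal{O}$ of period $p$, a small flow-box neighborhood of $\mathcal{O}$ can only intersect a long periodic horocycle in arcs whose total length is $O(p)$, so its $\nu_i$-mass is $O(p/s_i)\to 0$; the ``disjoint and separated'' phrasing is not quite the right mechanism. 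Otherwise the strategy is correct and matches the cited source.
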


\subsection{Existence of limiting distribution for Veech surfaces} 
\begin{figure}[t!]
\centering
\includegraphics[width=90mm]{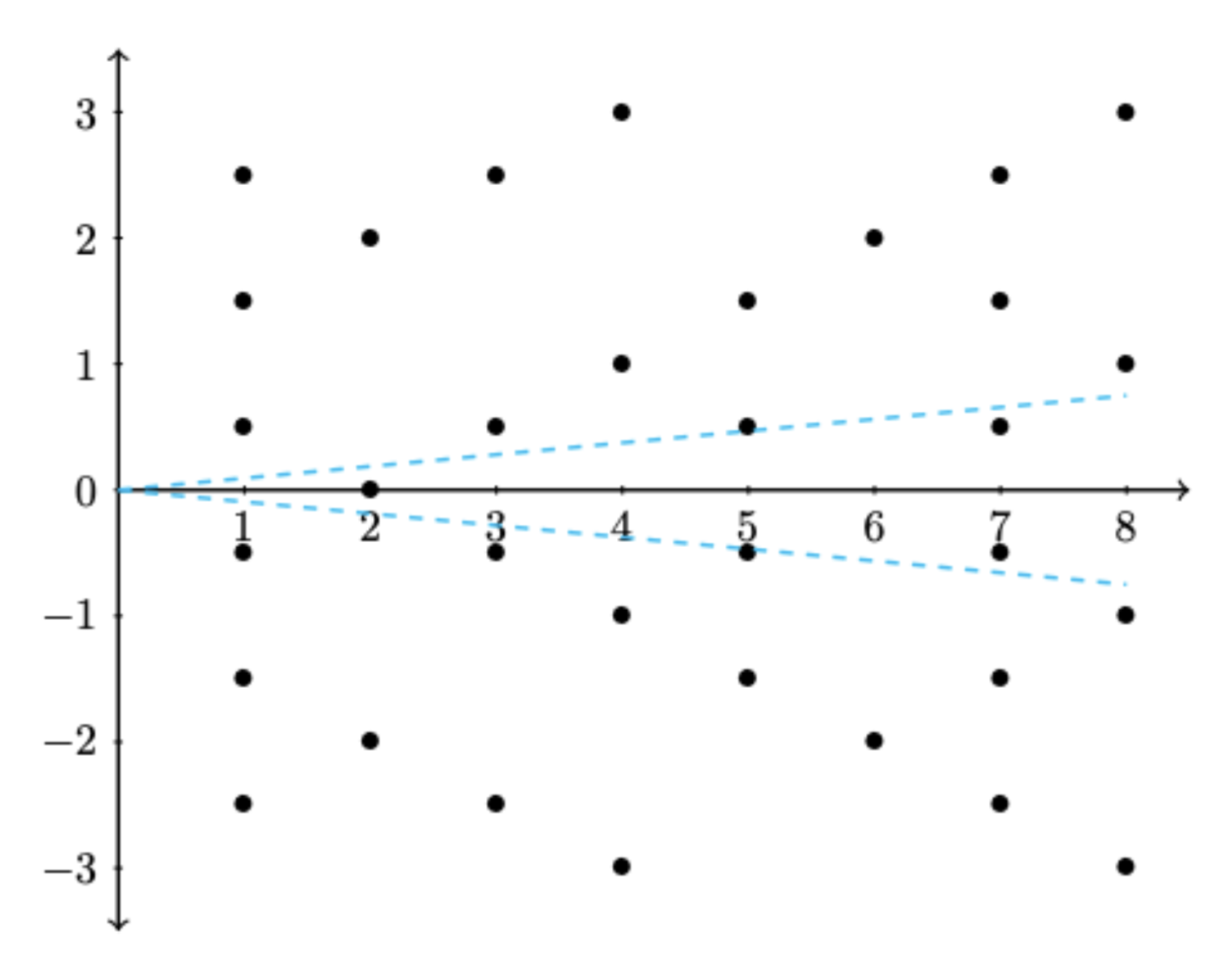}
\caption{Depiction of saddle connections on a genus 1 surface.} 
\label{Holonomy vectors intersecting a thin cone}
\end{figure}
A natural extension of the function $q_{\min}$ in the context of translation surfaces is the following:
\newline

\textbf{Question}: What is the point of smallest $x$-coordinate in $\Lambda_\omega\cap h_{-s}C_\delta$ as $s$ ranges over $\R$.
\newline

Due to the Veech dichotomy~\cite{HubSchm}, we know that in the context of Veech surfaces we can always find (up to rotating the surface if necessary the surface) an $\alpha>0$ such that $h_\alpha$ belongs to the Veech group of our surface.

Let $\delta>0$ and consider the following function: $$\Psi(\omega,\delta)=\min\left\{\Re(z_\gamma): \gamma \text{ is a saddle connection of } \omega \text{ and } z_\gamma\in \Lambda(\omega)\cap C_\delta\right\}.$$

\begin{Lemma}\label{lemma ts} Let $\omega$ be a translation surface, then

\begin{equation}
    \Psi(g_t\omega,\delta)=e^{\frac{t}{2}}\Psi(\omega,e^{t}\delta)
\end{equation}
and
\begin{equation}
    \Psi(g_{-\log\delta}\omega,\delta)=\sqrt{\delta}\Psi(\omega, 1).
\end{equation}

\end{Lemma}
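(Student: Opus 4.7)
The plan is to prove this exactly in parallel with Lemma~\ref{good lemma}(1), viewing $\Psi(\omega,\delta)$ as the saddle-connection analogue of $F^1_\delta(\Lambda)$. The single non-mechanical ingredient I need is the equivariance of the holonomy map under the $SL(2,\R)$ action on translation surfaces: for any $g\in SL(2,\R)$ and any saddle connection $\gamma$ on $\omega$, the image $g\gamma$ is a saddle connection on $g\omega$ whose holonomy vector is $g\cdot z_\gamma$, so $\Lambda_{g\omega}=g\Lambda_\omega$ as subsets of $\C\cong\R^2$. This follows immediately from the polygonal presentation of a translation surface, since the ambient linear action of $SL(2,\R)$ sends parallel edge-pairs to parallel edge-pairs and commutes with the identifications used to assemble $\omega$.

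With equivariance in hand, the first identity reduces to a short change-of-variable argument. Writing $z=x+iy$ and $g_t z = e^{t/2}x + ie^{-t/2}y$, a one-line check shows $g_t^{-1}(C_\delta)=C_{e^{t}\delta}$; the diagonal flow stretches $C_\delta$ along the vertical direction by exactly the factor that turns the slope bound $y<\delta x$ into $y<e^{t}\delta x$. Substituting $z=g_t w$ in the definition of $\Psi(g_t\omega,\delta)$, the minimum over $w\in \Lambda_\omega$ with $g_t w\in C_\delta$ picks up a factor of $e^{t/2}$ from $\Re(g_t w)=e^{t/2}\Re(w)$ and replaces the constraint by $w\in C_{e^{t}\delta}$, which is exactly $e^{t/2}\Psi(\omega,e^{t}\delta)$.

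The second identity is then obtained by specializing $t=-\log\delta$ in the first, so that $e^{t}=1/\delta$ and $e^{t/2}=\delta^{-1/2}$, producing the normalization that will let Theorem~\ref{VeechEqui} drive the proof of Theorem~\ref{cummulative sc}. I expect no serious obstacle: once the equivariance $\Lambda_{g\omega}=g\Lambda_\omega$ is stated, both parts are substitutions. The only step worth double-checking is the bookkeeping of exponents in the specialization, since the statement of Theorem~\ref{cummulative sc} uses the factor $\sqrt{\delta}$, and one must make sure the direction of the geodesic parameter is chosen consistently with that normalization.
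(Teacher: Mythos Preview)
Your approach is correct and is exactly the one the paper intends: the paper does not supply a separate proof of Lemma~\ref{lemma ts} at all, relying instead on the evident parallel with Lemma~\ref{good lemma}(1) and equation~\eqref{homogeneity}, which is precisely the computation you outline via the equivariance $\Lambda_{g\omega}=g\Lambda_\omega$ and the cone identity $g_t^{-1}C_\delta=C_{e^t\delta}$.

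One remark on the bookkeeping you flagged: your specialization $t=-\log\delta$ indeed gives $e^{t/2}=\delta^{-1/2}$, so the first identity yields $\Psi(g_{-\log\delta}\omega,\delta)=\delta^{-1/2}\Psi(\omega,1)$, not $\sqrt{\delta}\,\Psi(\omega,1)$ as printed. This is a typo in the paper's statement of the second equation; compare the analogous identity~\eqref{homogeneity}, which has the correct factor $\delta^{-1/2}$, and note that the proof of Theorem~\ref{cummulative sc} in fact uses the relation $\sqrt{\delta}\,\Psi(g_{-\log\delta}\omega',\delta)=\Psi(\omega',1)$, i.e.\ exactly your version. So your concern is well founded, but it is the paper's displayed coefficient that needs fixing, not your argument.
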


\begingroup
\def\thetheorem{\ref{cummulative sc}}
\begin{theorem}
Let $\omega$ be a Veech surface and suppose that $h_{\alpha}\in \Gamma_\omega$ for some $\alpha>0$. Let $P$ be the uniform probability measure on $[0,\alpha]$. Then as $\delta\rightarrow 0$,
$$P\left(\left\{s\in [0,\alpha]:\sqrt{\delta}\Psi(h_s\omega,\delta)\leq T\right\}\right)\rightarrow \frac{\mu_{\omega}\left(\left\{ g\Gamma_\omega\in Y_\omega: \Psi(g\Gamma_\omega,1)\leq T\right\}\right)}{\mu_\omega\left(Y_\omega\right)}.$$

\end{theorem}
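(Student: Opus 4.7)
The plan is to mimic the argument used for Theorem~\ref{cumulative 2}, replacing the Dani--Smillie theorem on $X_2$ by its Veech-surface counterpart, Theorem~\ref{VeechEqui}. By Lemma~\ref{lemma ts} we have the scaling identity $\Psi(g_{-\log\delta}\omega',\delta) = \sqrt{\delta}\,\Psi(\omega',1)$ for any translation surface $\omega'$, so setting $\omega' = h_s\omega$ gives
\[
   \sqrt{\delta}\,\Psi(h_s\omega,\delta) \leq T \iff \Psi\bigl(g_{-\log\delta}h_s\omega,1\bigr) \leq T.
\]
Using the conjugation relation~\eqref{eq:conjugacy}, namely $g_{-\log\delta}h_s = h_{s/\delta}\,g_{-\log\delta}$, the event on the left is rewritten as an event about the point $h_{s/\delta}(g_{-\log\delta}\omega)$ in $Y_\omega$.

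Because $h_\alpha\in\Gamma_\omega$, the orbit $\{h_s\omega : s\in[0,\alpha]\}$ is periodic of period $\alpha$ in $Y_\omega$, and since $g_t$ normalizes the horocyclic subgroup in the way recorded by~\eqref{eq:conjugacy}, the translated orbit $\{h_{s/\delta}(g_{-\log\delta}\omega) : s\in[0,\alpha]\}$ is exactly the $h$-orbit through $g_{-\log\delta}\omega$ traversed with period $\alpha/\delta$. Thus, if $\nu_\delta$ denotes the normalized uniform measure on this orbit, then for any bounded Borel $\Phi$ on $Y_\omega$,
\[
   \int_0^\alpha \Phi\bigl(g_{-\log\delta}h_s\omega\bigr)\, dP(s) = \int_{Y_\omega} \Phi\, d\nu_\delta.
\]
As $\delta\to 0$ the period $\alpha/\delta\to\infty$, so Theorem~\ref{VeechEqui} yields $\nu_\delta \stackrel{\ast}{\rightharpoonup} \mu_\omega/\mu_\omega(Y_\omega)$. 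Applied to $\Phi = \chi_{B_T}$ with $B_T = \{g\Gamma_\omega : \Psi(g\Gamma_\omega,1)\leq T\}$, this is exactly the claimed conclusion, provided one can pass from weak$^*$ convergence to convergence of measures of $B_T$, i.e.\ provided $\mu_\omega(\partial B_T)=0$ (this is the content of Theorem~\ref{general equidistribution}).

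The main technical step, and likely the hardest, is verifying that $\mu_\omega(\partial B_T)=0$. This is the analogue of Lemma~\ref{Study of A_T}, and I would prove it in the same style: a point on the boundary of $B_T$ must either realize $\Psi(g\Gamma_\omega,1)=T$ exactly, or must have a holonomy vector lying on $\partial C_1 = \{x+iy : y = x \text{ or } y = -x,\ x>0\}$ or on the degenerate vertical ray $\{x=0\}$. Discreteness of $\Lambda_\omega$ together with the continuity of the $SL(2,\R)$-action on holonomy vectors shows that in a small neighborhood of a generic surface the finitely many short holonomy vectors vary continuously, so a small perturbation strictly decreases or increases the minimum. Consequently $\partial B_T$ sits inside the countable union, over saddle connections $\gamma$, of the two codimension-one sets $\{g\Gamma_\omega : \Re(z_\gamma) = T\}$ and $\{g\Gamma_\omega : \Im(z_\gamma) = \pm\Re(z_\gamma)\}\cup\{\Re(z_\gamma)=0\}$. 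Each of these is a proper real-analytic subvariety of $Y_\omega$, hence $\mu_\omega$-null, and a countable union of null sets is null. Once this is established, the combination of the scaling identity, the conjugation identity, and Theorem~\ref{VeechEqui} delivers the theorem.
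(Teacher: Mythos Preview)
Your strategy is exactly the one the paper uses: rewrite the event via the scaling identity of Lemma~\ref{lemma ts}, conjugate to see a long horocycle through a geodesic translate of $\omega$, and then invoke Theorem~\ref{VeechEqui}. One slip to fix: the direction of the geodesic flow is reversed throughout. From $\Psi(g_t\omega',\delta)=e^{t/2}\Psi(\omega',e^t\delta)$ with $t=-\log\delta$ one gets $\Psi(g_{-\log\delta}\omega',\delta)=\delta^{-1/2}\Psi(\omega',1)$ (the paper's Lemma~\ref{lemma ts} has a typo here), so inserting $g_{-\log\delta}g_{\log\delta}$ yields
\[
\sqrt{\delta}\,\Psi(h_s\omega,\delta)=\Psi\bigl(g_{\log\delta}h_s\omega,1\bigr),
\]
not $g_{-\log\delta}$. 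Correspondingly the conjugation reads $g_{\log\delta}h_s=h_{s/\delta}\,g_{\log\delta}$, and it is $g_{\log\delta}\omega$ whose horocycle period is $\alpha/\delta\to\infty$; with $g_{-\log\delta}$ the period would be $\alpha\delta\to 0$ and equidistribution would fail. Once the sign is corrected your argument is the paper's proof verbatim.

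Your added verification that $\mu_\omega(\partial B_T)=0$ is a genuine improvement: the paper's own proof simply applies Theorem~\ref{VeechEqui} to $\chi_A$ without checking this hypothesis, whereas you sketch the analogue of Lemma~\ref{Study of A_T} in the Veech setting. That argument (boundary points force either $\Re(z_\gamma)=T$ or a holonomy vector on $\partial C_1$, and each such condition cuts out a proper analytic subvariety of $Y_\omega$) is correct and worth keeping.
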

\addtocounter{theorem}{-1}
\endgroup
\begin{proof}
This proof has the same strategy as that of Theorem~\ref{cumulative 2}, we change to the appropriate equidistribution theorem to pass to the limit.
\newline

Let $$A= \{k\Gamma_\omega\in Y_\omega:\Psi(k\omega, 1)\leq T\}.$$
We now proceed to the computation
\begin{align*}
    P\left(\left\{s\in [0,\alpha]:\sqrt{\delta}\Psi(h_s\omega,\delta)\leq T\right\}\right)&=P\left(\left\{s\in [0,\alpha]:\sqrt{\delta}\Psi(g_{-\log \delta}g_{\log \delta}h_s\omega,\delta)\leq T\right\}\right)\\
    &=P\left(\left\{s\in [0,\alpha]:\Psi(g_{\log \delta}h_s\omega,1)\leq T\right\}\right)\\
    &=\frac{1}{\alpha}\int_0^\alpha \chi_{A}(g_{\log \delta}h_s\omega)ds
\end{align*}
Since $g_{\log \delta}h_s=h_{\frac{s}{\delta}}g_{\log\delta}$, we have that the period of $g_{\log{\delta}}\omega$ under the horocyclic flow is $\frac{\alpha}{\delta}$ and hence by Theorem~\ref{VeechEqui}, the orbit $g_{\log \delta}\omega$ under the horocyclic flow are becoming equidistributed as $\delta \rightarrow 0$. This means that
\begin{align*}
    \lim_{\delta\rightarrow0}\frac{1}{\alpha}\int_0^\alpha \chi_{A}(g_{\log \delta}h_s\omega)ds= \lim_{\delta\rightarrow0}\frac{\delta}{\alpha}\int_0^{\frac{\alpha}{\delta}}\chi_A(h_sg_{\log\delta}\omega)ds=\frac{1}{\mu_\omega(Y_\omega)}\int_{Y_\omega}\chi_A(g\Gamma_\omega)d\mu_\omega.
\end{align*}
And and we have that 
$$\frac{1}{\mu_\omega(Y_\omega)}\int_{Y_\omega}\chi_A(g\Gamma_\omega)d\mu_\omega=\frac{\mu_{\omega}\left(\left\{ g\Gamma_\omega\in Y_\omega: \Psi(g\Gamma_\omega,1)\leq T\right\}\right)}{\mu_\omega\left(Y_\omega\right)},$$
which is what we wanted to show.
\end{proof}
%
%
\begin{corollary}
    Let $\omega$ be a Veech surface and suppose that $h_{\alpha}\in SL(\omega)$ for some $\alpha>0$, then as $\delta\rightarrow 0$,
    $$\mathbb{E}_x\left[\sqrt{\delta}\Psi(h_s\omega,\delta)\right]=\frac{\sqrt{\delta}}{\alpha}\int_0^\alpha\Psi(h_s\omega,\delta)ds\rightarrow \frac{1}{\mu_\omega(X_\omega)}\int_{X_\omega}\Psi(g\Gamma_\omega,1) d\mu_{\omega}.$$
\end{corollary}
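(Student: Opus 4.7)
The plan is to mimic the proof of Corollary~\ref{MeanF1} essentially verbatim, replacing the role of Theorem~\ref{cumulative 2} with Theorem~\ref{cummulative sc} and the role of $X_2$ with $Y_\omega$. The main step is the layer-cake representation of the expectation. First, I would write
\begin{align*}
\mathbb{E}_s\!\left[\sqrt{\delta}\,\Psi(h_s\omega,\delta)\right]
&=\frac{1}{\alpha}\int_0^\alpha \sqrt{\delta}\,\Psi(h_s\omega,\delta)\,ds\\
&=\int_0^\infty P\!\left(\left\{s\in[0,\alpha]:\sqrt{\delta}\,\Psi(h_s\omega,\delta)>T\right\}\right)dT,
\end{align*}
using Fubini/Tonelli on the product of $[0,\alpha]$ and $[0,\infty)$.

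Next, by Theorem~\ref{cummulative sc}, for every $T>0$ at which the distribution function is continuous the integrand converges pointwise, as $\delta\to 0$, to
$$1-\frac{\mu_\omega\!\left(\{g\Gamma_\omega\in Y_\omega:\Psi(g\Gamma_\omega,1)\leq T\}\right)}{\mu_\omega(Y_\omega)}=\frac{\mu_\omega\!\left(\{g\Gamma_\omega\in Y_\omega:\Psi(g\Gamma_\omega,1)>T\}\right)}{\mu_\omega(Y_\omega)}.$$
Passing the limit inside the $T$-integral (the point I comment on below) and then applying the layer-cake formula in reverse on $Y_\omega$, I recover
$$\int_0^\infty \frac{\mu_\omega\!\left(\{g\Gamma_\omega:\Psi(g\Gamma_\omega,1)>T\}\right)}{\mu_\omega(Y_\omega)}\,dT=\frac{1}{\mu_\omega(Y_\omega)}\int_{Y_\omega}\Psi(g\Gamma_\omega,1)\,d\mu_\omega,$$
which is precisely the claimed limit.

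The main (and essentially only) obstacle is justifying the interchange of $\lim_{\delta\to 0}$ with $\int_0^\infty(\cdot)\,dT$. A clean way is to exhibit a $\delta$-independent integrable dominating tail bound $P(\sqrt{\delta}\Psi(h_s\omega,\delta)>T)\leq \varphi(T)$ with $\int_0^\infty \varphi(T)\,dT<\infty$, and then invoke dominated convergence. Such a bound should come from the Veech dichotomy and a reduction to large-$T$ behavior of holonomy vectors in the cusps of $Y_\omega$: for a Veech surface the number of holonomy vectors in a fixed ball grows quadratically (Masur), so the probability that the first saddle in the cone $C_\delta$ has horizontal part exceeding $T/\sqrt{\delta}$ should decay like $T^{-2}$, yielding integrable tails. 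Alternatively, one can simply observe that the argument of Corollary~\ref{MeanF1} is formally the same and that the required domination is the Veech-group analogue of the standard tail bound on $F_1^1$ on $X_2$; this is the step where any careful write-up should do some work, and it is the only part not already contained in Theorem~\ref{cummulative sc}.
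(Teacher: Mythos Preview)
Your approach is exactly what the paper intends: the corollary is stated without proof, and the analogous Corollary~\ref{MeanF1} is proved precisely by the layer-cake representation followed by an appeal to the corresponding distribution-level theorem, with no further justification for passing the limit through the $T$-integral. You are in fact more careful than the paper in flagging the dominated-convergence step; the paper simply asserts the conclusion after writing the layer-cake identity.
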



\begin{thebibliography}{100} 
\addtolength{\leftmargin}{0.2in} 

\bibitem[1]{AthrChai}  Athreya, J. S., and Chaika, J. (2012). The distribution of gaps for Saddle Connection Directions. Geometric and Functional Analysis, 22(6), 1491–1516. 

\bibitem[2]{AthreyaGhosh} Athreya, J., and Ghosh, A. (2019). The erdős–szüsz–turán distribution for Equivariant Processes. L’Enseignement Mathématique, 64(1), 1–21. 

\bibitem[3]{BZ} Boca, F. P., and Zaharescu, A. (2005). The correlations of Farey fractions. Journal of the London Mathematical Society, 72(01), 25–39. 

\bibitem[4]{CH}Chen H. and Haynes A., ``Expected value of the smallest denominator in a random interval of fixed radius".  available at arXiv:2109.12668, 2022

\bibitem[5]{DaniSmillie} Dani, S. G., and Smillie, J. (1984). Uniform distribution of Horocycle orbits for Fuchsian groups. Duke Mathematical Journal, 51(1). 

\bibitem[6]{EskinMasur} Eskin, A., and Masur, H. (2001). Asymptotic formulas on Flat Surfaces. Ergodic Theory and Dynamical Systems, 21(2), 443–478. 

\bibitem [7]{Franel} Franel, J. (1924). Les suites de Farey et le problème des nombres premiers. Nachrichten von Der Gesellschaft Der Wissenschaften Zu Göttingen, Mathematisch-Physikalische Klasse, 1924, 198–201. 

\bibitem[8]{Hall} Hall, R. R. (1970). A note on Farey series. Journal of the London Mathematical Society, s2-2(1), 139–148. 

\bibitem[9]{HubSchm}Hubert, P., and Schmidt, T. (2006). An Introduction to Veech Surfaces. Handbook of Dynamical Systems, 1B, 501–524. 

\bibitem[10]{Kim} Kim, S. (2015). On the distribution of lengths of short vectors in a random lattice. Mathematische Zeitschrift, 282(3–4), 1117–1126. 
\bibitem[11]{KleinMarg} Kleinbock, D. and Margulis, G. (1996). Bounded orbits of nonquasiunipotent flows on homogeneous spaces. Amer. Math. Soc. Transl.. 171. 

\bibitem[12]{KSW} Kumanduri, L., Sanchez, A., and Wang, J. (2021). Slope Gap Distributions of Veech Surfaces.

\bibitem[13]{Landau} Landau E.(1924). Bemerkungen zu der vorstehenden Abhandlung von Herrn Franel. Gottinger Nachr., 202-206.

\bibitem[14]{MarklofStrombergsson} Marklof, J., and Strömbergsson, A. (2010a). The distribution of free path lengths in the periodic Lorentz gas and related lattice point problems. Annals of Mathematics, 172(3), 1949–2033. 

\bibitem[15]{Masur} Masur, H. (1990). The growth rate of trajectories of a quadratic differential. Ergodic Theory and Dynamical Systems, 10(1), 151–176. 

\bibitem[16]{SanderMeiss} Sander, E., and Meiss, J. D. (2020). Birkhoff averages and rotational invariant circles for area-preserving maps. Physica D: Nonlinear Phenomena, 411, 132569. 

\bibitem[17]{Siegel} Siegel, C. L. (1945). A mean value theorem in geometry of numbers. The Annals of Mathematics, 46(2), 340. 

\bibitem [18]{Veech} Veech, W. A. (1998). Siegel measures. The Annals of Mathematics, 148(3), 895. https://doi.org/10.2307/121033 
\bibitem[19]{Zorich} Zorich, A. (2006). Flat surfaces. Frontiers in Number Theory, Physics, and Geometry I, 439–585. 





\end{thebibliography}
\end{document}